\documentclass[11pt,a4paper]{article}

\usepackage[margin=1in]{geometry}
\usepackage{amsmath,amssymb,amsthm}
\usepackage{mathtools}
\usepackage{microtype}
\usepackage{graphicx}
\usepackage{enumitem}
\usepackage{natbib}
\usepackage[colorlinks,citecolor=blue,urlcolor=blue,linkcolor=blue]{hyperref}

\theoremstyle{plain}
\newtheorem{theorem}{Theorem}[section]
\newtheorem{lemma}[theorem]{Lemma}

\newtheorem{corollary}[theorem]{Corollary}
\theoremstyle{definition}
\newtheorem{definition}[theorem]{Definition}
\newtheorem{assumption}[theorem]{Assumption}
\theoremstyle{remark}
\newtheorem{remark}[theorem]{Remark}

\newcommand{\R}{\mathbb{R}}
\newcommand{\E}{\mathbb{E}}
\newcommand{\Var}{\mathrm{Var}}
\newcommand{\Prob}{\mathbb{P}}
\newcommand{\ind}{\mathbf{1}}
\newcommand{\sign}{\mathrm{sign}}
\newcommand{\prox}{\mathrm{prox}}
\newcommand{\dom}{\mathrm{dom}}
\newcommand{\clip}{\mathrm{clip}}

\title{\textbf{The Conjugate Domain Dichotomy:\\
Exact Risk of M-Estimators under\\
Infinite-Variance Noise in High Dimensions}}

\author{Charalampos Agiropoulos\\
\small Department of Economics, University of Piraeus\\
\small \texttt{hagyropoylos@unipi.gr}}

\date{\today}

\begin{document}

\maketitle

\begin{abstract}
\noindent
This paper studies high-dimensional M-estimation in the
proportional asymptotic regime ($p/n \to \gamma > 0$) when
the noise distribution has infinite variance.  For noise with
regularly-varying tails of index $\alpha \in (1,2)$, we
establish that the asymptotic behavior of a regularized
M-estimator is governed by a single geometric property of the
loss function: the boundedness of the domain of its Fenchel
conjugate.

When this conjugate domain is bounded --- as is the case for
the Huber, absolute-value, and quantile loss functions --- the
dual variable in the min-max formulation of the estimator is
confined, the effective noise reduces to the finite first
absolute moment of the noise distribution, and the estimator
achieves bounded risk without recourse to external information.
When the conjugate domain is unbounded --- as for the squared
loss --- the dual variable scales with the noise, the effective
noise involves the diverging second moment, and bounded risk
can be achieved only through transfer regularization toward an
external prior.

For the squared-loss class specifically, we derive the exact
asymptotic risk via the Convex Gaussian Minimax Theorem under
a noise-adapted regularization scaling
$\lambda_n = \tilde\lambda\,\sigma_n^2$.  The resulting risk
converges to a universal floor
$q_\Sigma = p^{-1}\|\beta^* - \beta_0\|_{\Sigma_x}^2$ that
is independent of the regularizer, yielding a loss--risk
trichotomy: squared-loss estimators without transfer diverge;
Huber-loss estimators achieve bounded but non-vanishing risk;
transfer-regularized estimators attain $q_\Sigma$.

\medskip\noindent
\textit{Keywords:} Conjugate domain, Convex Gaussian minimax
theorem, heavy-tailed noise, high-dimensional regression,
M-estimation, phase transition, proportional asymptotics,
regular variation, transfer learning, Winsorization.

\medskip\noindent
\textit{MSC2020:} 62J07, 62F35, 60B20, 62C20.
\end{abstract}

\tableofcontents


\section{Introduction}\label{sec:intro}

\subsection{Background and motivation}\label{sec:background}

Consider the linear model
\begin{equation}\label{eq:model}
  y = X\beta^* + w,
\end{equation}
where $X \in \R^{n \times p}$ is a design matrix with
independent rows drawn from $\mathcal{N}(0, \Sigma_x)$,
$\beta^* \in \R^p$ is the unknown regression parameter, and
$w \in \R^n$ is a noise vector with independent and identically
distributed entries.  In the proportional asymptotic regime
$p/n \to \gamma \in (0,\infty)$, a substantial body of recent
work has established precise characterizations of the
estimation risk for regularized least-squares estimators under
finite noise variance.  The foundational results of
\cite{DobribanWager2018} and
\cite{HastieMontanariRossetTibshirani2022} for ridge
regression, the extension to general convex regularizers via
the Convex Gaussian Minimax Theorem (CGMT) by
\cite{Thrampoulidis2015,ThrampoulidisOymakHassibi2018}, and the
analysis of benign overfitting by
\cite{BartlettLongLugosiTsigler2020,TsiglerBartlett2023} all
rely on the assumption that the noise variance is finite and
fixed as $n \to \infty$.

The present work considers the question of what can be said
when this assumption fails.  Distributions with
regularly-varying tails --- including Pareto distributions,
Student-$t$ distributions with few degrees of freedom, and
$\alpha$-stable laws with stability index $\alpha < 2$ --- arise
naturally in the modeling of financial returns, insurance
claims, network traffic, and seismological recordings
\cite{Resnick2007,SamorodnitskyyTaqqu1994}.  For such
distributions with tail index $\alpha \in (1,2)$, the first
moment is finite but the variance is infinite.  The classical
bias--variance decomposition, which underpins much of the
theory of high-dimensional regression, is no longer applicable
in this setting.

\subsection{A structural tension}\label{sec:tension}

A standard approach to handling infinite-variance noise is
\emph{Winsorization}: clipping the noise at a
sample-size-dependent threshold $\tau_n = n^{1/\alpha}$ to
produce a proxy model with finite but growing variance.  We
show (Theorem~\ref{thm:truncation}) that this procedure
yields an effective variance
\begin{equation}\label{eq:sigma-intro}
  \sigma_n^2
  = \frac{2c}{2-\alpha}\,n^{(2-\alpha)/\alpha}(1+o(1))
\end{equation}
for any noise distribution satisfying
$\Prob(|w| > t) \sim c\,t^{-\alpha}$ as $t \to \infty$, where
the asymptotic constant $c > 0$ and the rate depend only on
the tail parameters $\alpha$ and $c$.

This diverging effective variance gives rise to a noteworthy
tension.  The total Fisher information contained in the
observations satisfies
$I_n = n/\sigma_n^2 = \Theta(n^{(2\alpha-2)/\alpha}) \to
\infty$ for all $\alpha > 1$
(Theorem~\ref{thm:fisher}), so the data are, in an
information-theoretic sense, increasingly informative about
$\beta^*$.  It is therefore not the case that the observations
lack the information necessary for consistent estimation.

At the same time, any estimator constructed by minimizing a
squared-loss objective $\frac{1}{2n}\|y - X\beta\|^2$ inherits
the diverging noise variance directly.  The risk of the
ordinary least-squares estimator grows as
$\Theta(\sigma_n^2)$.  Ridge regression with a fixed
regularization parameter exhibits the same divergent behavior.
Even when the regularization strength is adapted to the noise
level --- specifically, when $\lambda_n = \Theta(\sigma_n^2)$
--- the estimator merely exchanges diverging variance for an
irreducible bias that depends on the distance between the
truth and the regularization center
(Theorem~\ref{thm:l2-diverge}).

The difficulty, then, is not a lack of information in the data,
but rather a mismatch between the loss function and the noise
structure.  The squared loss, through its sensitivity to the
second moment of the noise, amplifies the diverging variance
into the estimation risk.

\subsection{The conjugate domain as the governing mechanism}
\label{sec:mechanism}

The central contribution of this paper is the identification
of the precise geometric property of the loss function that
determines whether an M-estimator can tolerate
infinite-variance noise.  This property is the
\emph{boundedness of the domain of the Fenchel conjugate}.

Recall that any proper closed convex loss function $L$
admits the dual representation
$L(a) = \sup_{u \in \dom(L^*)}\{ua - L^*(u)\}$, where
$L^*(u) = \sup_t\{ut - L(t)\}$ is the Fenchel conjugate.
In the min-max formulation of the M-estimator that arises
within the CGMT framework, the dual variable $u_i$
associated with the $i$-th observation is constrained to
$\dom(L^*)$.  The interaction between this constraint and
the noise $w_i$ is the mechanism through which the loss
function either amplifies or attenuates the effect of heavy
tails:
\begin{itemize}[nosep,leftmargin=1.5em]
  \item When $\dom(L^*) \subseteq [-K,K]$ for some $K > 0$,
  each dual variable satisfies $|u_i| \le K$.  The noise
  coupling term $n^{-1}\sum_{i=1}^n u_i w_i$ is then bounded
  by $K \cdot n^{-1}\sum|w_i|$, which involves only the first
  absolute moment of the noise.  Since $\E[|w|] < \infty$
  whenever $\alpha > 1$, this quantity remains bounded, and
  the estimator achieves finite risk.
  \item When $\dom(L^*) = \R$, the optimal dual variable
  scales with the noise (for the squared loss,
  $u_i^* \propto w_i$), and the noise coupling involves the
  second moment $n^{-1}\sum w_i^2$, which diverges when
  $\alpha < 2$.
\end{itemize}

The squared loss, with $L^*(u) = u^2/2$ and $\dom(L^*) = \R$,
is the prototypical loss with unbounded conjugate domain.
The Huber loss, with $L^*(u) = u^2/2$ for $|u| \le k$ and
$L^*(u) = +\infty$ for $|u| > k$, is the prototypical loss
with bounded conjugate domain.  More generally, a loss
function has bounded conjugate domain if and only if it grows
at least linearly at infinity:
$\liminf_{|t| \to \infty} L(t)/|t| > 0$.  This
characterization encompasses the absolute-value loss, the
quantile loss, the log-cosh loss, and all Huber-type losses.

We formalize this classification as the
\emph{Conjugate Domain Dichotomy}
(Theorem~\ref{thm:cdd}).

\subsection{Summary of contributions}\label{sec:contributions}

The paper makes the following contributions.

\begin{enumerate}[leftmargin=1.5em,label=(\roman*)]
  \item \emph{The Conjugate Domain Dichotomy}
  (Theorem~\ref{thm:cdd}).  We prove that, under
  regularly-varying noise with index $\alpha \in (1,2)$, a
  regularized M-estimator achieves bounded risk if and only if
  its loss function has bounded conjugate domain.  The proof
  identifies the first-versus-second moment mechanism described
  above as the quantitative explanation
  (Corollary~\ref{cor:moment}).

  \item \emph{The moment hierarchy}
  (Remark~\ref{rmk:moment-hierarchy}).  We extend the
  dichotomy to a continuous classification: a loss whose
  conjugate grows as $|u|^q$ requires finiteness of the
  $q$-th noise moment.  This yields a complete correspondence
  between conjugate growth rate and noise moment requirements.

  \item \emph{Truncation universality}
  (Theorem~\ref{thm:truncation}).  For any noise distribution
  in the domain of attraction of an $\alpha$-stable law,
  Winsorization at the threshold $\tau_n = n^{1/\alpha}$
  produces the effective variance~\eqref{eq:sigma-intro},
  with concentration of the empirical squared norm around this
  deterministic equivalent.

  \item \emph{Exact risk characterization via the CGMT}
  (Theorem~\ref{thm:risk-general}).  For squared-loss
  estimators with noise-adapted regularization
  $\lambda_n = \tilde\lambda\,\sigma_n^2$, we derive the
  exact asymptotic risk as the solution of a fixed-point
  system involving the proximal operator of the regularizer
  and the spectral distribution of $\Sigma_x$.

  \item \emph{Universal risk floor}
  (Theorem~\ref{thm:floor}).  In the diverging-noise limit,
  the fixed-point system admits an explicit solution: the risk
  converges to
  $q_\Sigma = p^{-1}\|\beta^*-\beta_0\|_{\Sigma_x}^2$,
  independent of the choice of coercive convex regularizer.
  This universality arises from the proximal operator collapsing
  under a diverging step size.

  \item \emph{The loss--risk trichotomy}
  (Theorem~\ref{thm:trichotomy}).  Combining the dichotomy
  with the exact risk characterization yields three distinct
  regimes: squared-loss estimators without transfer have
  diverging risk; Huber-loss estimators achieve bounded risk
  $\mathcal{R}_H < \infty$ but cannot eliminate the
  proportional-regime penalty; transfer-regularized squared-loss
  estimators attain $q_\Sigma \ll \mathcal{R}_H$ when the
  prior is informative.

  \item \emph{Design universality}
  (Theorem~\ref{thm:universality}).  All results extend from
  Gaussian to sub-Gaussian designs by verification of the
  conditions in the universality framework of
  \cite{MontanariSaeed2022}.
\end{enumerate}

\subsection{Scope and limitations}\label{sec:scope}

The results apply to proper closed convex loss functions
paired with coercive convex regularizers.  The analysis does
not cover non-convex penalties such as SCAD or MCP, nor does
it address the case of heavy-tailed covariates, which would
require tools beyond the CGMT.  The tail index is restricted
to $\alpha \in (1,2)$; for $\alpha \le 1$ the mean is
infinite and the truncation bridge of
Theorem~\ref{thm:truncation} does not apply.  We do not
address exact support recovery, which requires techniques
distinct from the $L_2$-risk analysis pursued here, nor
data-driven selection of the regularization parameter.

\subsection{Related work}\label{sec:related}

\emph{Proportional-regime risk characterization.}
The precise risk of ridge regression in the proportional regime
was established by \cite{DobribanWager2018} and
\cite{HastieMontanariRossetTibshirani2022}, building on the
Marchenko--Pastur law \cite{MarchenkoPastur1967} and
companion Stieltjes transform techniques
\cite{BaiSilverstein2010}.  The CGMT, developed by
\cite{Thrampoulidis2015,ThrampoulidisOymakHassibi2018} from
Gordon's Gaussian comparison inequality \cite{Gordon1988},
extended these results to general convex regularizers and
M-estimators.  The phenomena of benign overfitting
\cite{BartlettLongLugosiTsigler2020,TsiglerBartlett2023} and
double descent
\cite{BelkinHsuMaMandal2019,AdvaniSaxe2020,MeiMontanari2022}
have been analyzed within this framework.  All of these works
assume finite and fixed noise variance.

\emph{Heavy-tailed regression.}
Estimation under heavy-tailed noise has been studied from
several perspectives.  \cite{LugosiMendelson2019} established
sub-Gaussian estimation rates via median-of-means techniques,
and \cite{Adomaityte2024} derived exact asymptotics under
heavy-tailed data in a related high-dimensional setting.
The robust statistics literature, originating with
\cite{Huber1964} and developed in \cite{HuberRonchetti2009},
provides the classical foundation for M-estimation with
bounded influence functions.  High-dimensional robust
regression was analyzed by \cite{ElKarouiBeanBickelEtAl2013}
for fixed noise variance.

\emph{Transfer learning.}
Transfer learning in high-dimensional linear models was studied
by \cite{DarBaraniuk2022}, who characterized the double descent
phenomenon under task transfer, and by \cite{LiCaiLi2022}, who
established minimax rates for transfer estimation.

\emph{Universality.}
Design universality for empirical risk minimization was
established by \cite{MontanariSaeed2022} via a continuous
interpolation argument, and by \cite{HuLu2023} for random
feature models.

The present work differs from the above in two principal
respects.  First, we consider a regime in which the noise
variance diverges with the sample size --- a qualitatively
different setting from the fixed-variance regime studied in
the CGMT and proportional-asymptotics literatures.  Second,
we identify the conjugate domain of the loss function as the
geometric property that governs whether an estimator can
tolerate this divergence, providing a classification that
applies to all convex losses simultaneously rather than
analyzing specific estimators individually.

\section{The Conjugate Domain Dichotomy}\label{sec:cdd}

\subsection{Assumptions and definitions}\label{sec:assumptions}

We work throughout with the observation model~\eqref{eq:model}
under the following conditions.

\begin{assumption}[Design and dimensionality]\label{ass:design}
The design matrix satisfies $X = Z\Sigma_x^{1/2}$, where
$Z \in \R^{n \times p}$ has independent standard Gaussian
entries, and $\Sigma_x \in \R^{p \times p}$ is positive
definite with empirical spectral distribution converging
weakly to a compactly supported probability measure $\mu$ on
$(0,\infty)$.  The dimensions satisfy $p/n \to \gamma \in
(0,\infty)$ as $n, p \to \infty$.
\end{assumption}

\begin{assumption}[Regularly-varying noise]\label{ass:rv}
The noise variables $w_1, w_2, \ldots$ are independent and
identically distributed with $\E[w_i] = 0$ and tail behavior
$\Prob(|w_i| > t) = c\,t^{-\alpha}(1 + o(1))$ as
$t \to \infty$, for some $\alpha \in (1,2)$ and $c > 0$.
\end{assumption}

Assumption~\ref{ass:rv} places the noise distribution in the
domain of attraction of an $\alpha$-stable law.  The
condition $\alpha > 1$ ensures that $\E[|w_i|] < \infty$,
while $\alpha < 2$ implies $\E[w_i^2] = \infty$.  The
requirement that the slowly varying component of the tail
converges to a constant (rather than being an arbitrary slowly
varying function) is imposed for clarity; it is satisfied by
Student-$t$, Pareto, and $\alpha$-stable distributions.

\begin{definition}[Winsorization]\label{def:winsor}
For a threshold $\tau > 0$, the Winsorized noise is
$w_i^{(\tau)} = w_i\ind_{\{|w_i| \le \tau\}}
+ \tau\,\sign(w_i)\ind_{\{|w_i| > \tau\}}$.
We set $\tau_n = n^{1/\alpha}$ and write
$\sigma_n^2 = \E[(w_i^{(\tau_n)})^2]$ for the effective
variance.
\end{definition}

\begin{definition}[Conjugate domain classes]\label{def:classes}
Let $L : \R \to \R \cup \{+\infty\}$ be a proper closed convex
function with Fenchel conjugate
$L^*(u) = \sup_t\{ut - L(t)\}$.  We say that $L$ has
\emph{bounded conjugate domain} if there exists $K > 0$ such
that $\dom(L^*) = \{u : L^*(u) < \infty\} \subseteq [-K,K]$,
and \emph{unbounded conjugate domain} if $\dom(L^*) = \R$.
\end{definition}

A standard result in convex analysis relates the conjugate
domain to the growth rate of the loss: $\dom(L^*)$ is bounded
if and only if $\liminf_{|t| \to \infty} L(t)/|t| > 0$.
Thus the class of losses with bounded conjugate domain
consists precisely of those that grow at least linearly at
infinity.  Among common loss functions, the squared loss
$L(t) = t^2/2$ is the unique standard example with unbounded
conjugate domain.  The Huber loss
\[
  \rho_k(t) = \begin{cases}
    t^2/2 & |t| \le k, \\
    k|t| - k^2/2 & |t| > k,
  \end{cases}
\]
the absolute-value loss $L(t) = |t|$, the log-cosh loss
$L(t) = \log\cosh(t)$, and the quantile loss
$L(t) = t(\tau - \ind_{\{t < 0\}})$ all have bounded conjugate
domain.

\subsection{Statement of the dichotomy}\label{sec:dichotomy}

We consider the general regularized M-estimator
\begin{equation}\label{eq:m-estimator}
  \hat\beta_L
  = \arg\min_{\beta \in \R^p}\;
  \frac{1}{n}\sum_{i=1}^n L(y_i - x_i^\top\beta)
  + \lambda_n\,R(\beta),
\end{equation}
where $R : \R^p \to \R \cup \{+\infty\}$ is a proper closed
coercive convex regularizer (meaning
$R(\beta) \to \infty$ as $\|\beta\| \to \infty$) and
$\lambda_n > 0$ is the regularization parameter.

\begin{theorem}[Conjugate Domain Dichotomy]\label{thm:cdd}
Under Assumptions~\ref{ass:design}--\ref{ass:rv}, the following
hold for the estimator~\eqref{eq:m-estimator}:
\begin{enumerate}
  \item[\textup{(B)}] If $L$ has bounded conjugate domain
  ($\dom(L^*) \subseteq [-K,K]$), then for any fixed
  $\lambda > 0$ and any coercive convex $R$,
  \[
    \frac{1}{p}\|\hat\beta_L - \beta^*\|_{\Sigma_x}^2
    \;\xrightarrow{\Prob}\;
    \mathcal{R}_L(K, \lambda, \gamma, \mu) \;<\; \infty,
  \]
  where $\mathcal{R}_L$ does not depend on $\sigma_n^2$.

  \item[\textup{(U)}] If $L$ has unbounded conjugate domain
  ($\dom(L^*) = \R$), then:
  \begin{enumerate}
    \item[\textup{(U1)}] Without transfer regularization
    ($R(\beta)$ centered at the origin), the risk either
    diverges (if $\lambda_n = o(\sigma_n^2)$) or converges to
    $p^{-1}\|\beta^*\|_{\Sigma_x}^2$
    (if $\lambda_n = \Theta(\sigma_n^2)$).
    \item[\textup{(U2)}] With transfer regularization
    $R(\beta - \beta_0)$ and
    $\lambda_n = \tilde\lambda\,\sigma_n^2$ for fixed
    $\tilde\lambda > 0$, the risk converges to
    $q_\Sigma = p^{-1}\|\beta^*-\beta_0\|_{\Sigma_x}^2$.
  \end{enumerate}
\end{enumerate}
\end{theorem}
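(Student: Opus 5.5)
The plan is to use the CGMT min--max reformulation of \eqref{eq:m-estimator} for both parts and read off the dichotomy from how the dual variable couples to the noise. Write $e = \beta - \beta^*$, $r_i = w_i - x_i^\top e$, and use $L(r_i) = \sup_{u_i\in\dom(L^*)}\{u_i r_i - L^*(u_i)\}$. The objective becomes
\[
  \min_{e}\max_{u\in\dom(L^*)^n}\; \frac1n u^\top w - \frac1n u^\top Z\Sigma_x^{1/2} e - \frac1n\sum_i L^*(u_i) + \lambda_n R(\beta^*+e),
\]
which is convex--concave and bilinear in the Gaussian matrix $Z$. Gordon's comparison then replaces $u^\top Z\Sigma_x^{1/2}e$ by $\|u\|\,g^\top\Sigma_x^{1/2}e + \|\Sigma_x^{1/2}e\|\,h^\top u$. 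In every case the argument has the same three steps: establish the compactness needed to apply the CGMT, scalarize the auxiliary optimization, and identify the deterministic limit.

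\textbf{Part (B).} If $\dom(L^*)\subseteq[-K,K]$, the maximization is over a compact set, so the CGMT applies without truncation. The noise enters only through $n^{-1}\sum_i u_i w_i$, and
\[
  \Bigl|\tfrac1n\textstyle\sum_i u_i w_i\Bigr| \le K\, n^{-1}\textstyle\sum_i|w_i|.
\]
This bound converges a.s.\ to $K\,\E|w|<\infty$ by the strong law, since $\alpha>1$. The steps are as follows.
\begin{enumerate}[nosep,leftmargin=1.5em]
  \item Show the primal minimizer is bounded with high probability. Coercivity of $R$ with fixed $\lambda$ bounds $\|e\|$ by a constant depending on $K\,\E|w|$ and $R$, via the comparison $\text{objective}(\hat\beta)\le \text{objective}(\beta^*)$ together with $L(r)\le L(0)+K|r|$-type Lipschitz bounds, valid because $L$ is $K$-Lipschitz on its domain.
  \item Scalarize the auxiliary problem into the usual fixed-point system in $(\tau,\beta)$, involving the Moreau envelope of $L$ evaluated at $w+\tau h$ and the proximal map of $R$ against $\mu$.
  \item Observe that the Moreau envelope of a $K$-Lipschitz loss, averaged over $w$, depends only on the law of $w$ through integrable quantities, because its derivative is a projection onto $[-K,K]$. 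The limit $\mathcal R_L$ therefore exists and is finite, and no $\sigma_n^2$ appears.
\end{enumerate}

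\textbf{Part (U).} For the squared loss we have $u_i^*=r_i$, so the noise coupling is $n^{-1}\|w\|^2$. I would replace $w$ by its Winsorization $w^{(\tau_n)}$. By Theorem~\ref{thm:truncation}, $\Prob(w\ne w^{(\tau_n)}) \le n\Prob(|w|>n^{1/\alpha}) \to c$, which is not small. So I would instead argue that the number of clipped entries is $O_\Prob(1)$ and that their effect on the risk is of lower order than the $\sigma_n^2$ scale, using perturbation bounds for the ridge-type solution. The concentration $n^{-1}\|w^{(\tau_n)}\|^2/\sigma_n^2\to1$ from Theorem~\ref{thm:truncation} then supplies the effective variance.
\begin{itemize}[nosep,leftmargin=1.5em]
  \item \textbf{(U1), $\lambda_n=o(\sigma_n^2)$.} The variance term of the fixed point scales like $\sigma_n^2$ times a factor bounded below. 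This gives divergence, as in Theorem~\ref{thm:l2-diverge}.
  \item \textbf{(U1) and (U2), $\lambda_n=\tilde\lambda\sigma_n^2$.} Dividing the objective by $\sigma_n^2$ gives a problem with $O(1)$ noise and an effective prox step of order $\sigma_n^2$. I would invoke Theorems~\ref{thm:risk-general} and \ref{thm:floor}: the proximal operator collapses to the minimizer set of $R(\cdot-\beta_0)$, namely $\beta_0$ (or $0$ without transfer). The risk therefore tends to $q_\Sigma$, or to $p^{-1}\|\beta^*\|_{\Sigma_x}^2$ in the untransferred case.
\end{itemize}

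\textbf{Main obstacle.} The hardest step is justifying the CGMT limit when the noise law changes with $n$ and has infinite variance. In (B) this means showing the scalar fixed point is well-posed and independent of truncation, which needs uniform integrability of $|w|$; that holds since $\E|w|<\infty$. In (U) it means controlling the $O_\Prob(1)$ unclipped extreme entries of size up to $n^{1/\alpha}$. A single such entry contributes roughly $n^{2/\alpha}/n = n^{(2-\alpha)/\alpha}$ to $n^{-1}\|w\|^2$, the same order as $\sigma_n^2$. My plan is to show these terms only rescale the effective variance by an $O_\Prob(1)$ factor. That suffices for divergence in (U1) and is harmless in (U2), because the limit $q_\Sigma$ does not depend on the noise level once $\lambda_n\asymp\sigma_n^2$.
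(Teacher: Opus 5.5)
Your route is the paper's own: CGMT with the dual confined to $[-K,K]$, which gives a first-moment coupling in (B), and Winsorization, rescaling by $\sigma_n^2$ and proximal collapse in (U). In (B) your version is slightly cleaner. You correctly observe that no truncation is needed there, since the dual set is already compact. Your Lipschitz comparison bound on $\hat\beta$ (given $\inf L>-\infty$) also covers fixed $\lambda$, which Lemma~\ref{lem:compact} (noise-adapted squared loss only) does not.

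The genuine gap is the divergence half of (U1). Your step ``the variance term scales like $\sigma_n^2$ times a factor bounded below'' fails as soon as $\lambda_n\to\infty$. For ridge, with $\tilde\lambda\sigma_n^2$ replaced by a general $\lambda_n$, the noise part of \eqref{eq:fp} is $\E_S[\sigma_n^2\tau^2S^2/(S+\lambda_n)^2]\asymp\sigma_n^2/\lambda_n^2$. You can also see this directly. Ridge centered at the origin gives $\hat\beta=(X^\top X/n+\lambda_n I)^{-1}(X^\top X\beta^*+X^\top w)/n$, so $\|\hat\beta\|\le\lambda_n^{-1}(\|X^\top X/n\|_{\mathrm{op}}\|\beta^*\|+\|X^\top w\|/n)$. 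Here $\|X^\top w\|/n=O_{\Prob}(\sigma_n)$ because $n^{-1}\|w\|^2=O_{\Prob}(\sigma_n^2)$. Take $\lambda_n=\sigma_n^{3/2}=o(\sigma_n^2)$: the estimator collapses to $0$, and the risk converges to $p^{-1}\|\beta^*\|_{\Sigma_x}^2$ rather than diverging. The divergence threshold is $\lambda_n\asymp\sigma_n$, not $\sigma_n^2$. As stated, this clause is false, so no route can prove it. Citing Theorem~\ref{thm:l2-diverge}(iii) does not help: the paper's argument there (loss of the compactification lemma when $\tilde\lambda\to0$) only removes a sufficient condition for boundedness. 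Under the literal normalization of Assumption~\ref{ass:design}, with risk divided by $p$, the situation is worse. The noise contribution carries the factor $\sigma_n^2/n\asymp n^{2(1-\alpha)/\alpha}\to0$ visible in \eqref{eq:ridge-risk}. Even OLS (for $\gamma<1$) has risk $\approx n^{-1}\|w\|^2/(n(1-\gamma))\to0$.

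A second, repairable issue is that you invoke Theorem~\ref{thm:truncation}(ii), but your own count refutes it. The entries with $|w_i|\in(\epsilon\tau_n,\tau_n]$ are also $O_{\Prob}(1)$ in number, asymptotically Poisson with mean $c(\epsilon^{-\alpha}-1)$. Each contributes order $\sigma_n^2$ to $n^{-1}\|w^{(\tau_n)}\|^2$. So $n^{-1}\|w^{(\tau_n)}\|^2/\sigma_n^2$ has a non-degenerate limit in law rather than tending to $1$. This is consistent with the paper's $\Theta(1)$ relative variance; the Karamata ratio in Appendix~\ref{app:truncation} should be $1-(a_n/\tau_n)^{2-\alpha}\to1$. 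Your earlier hope that the clipped entries are of lower order likewise contradicts your own estimate.

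What survives is exactly your fallback: $n^{-1}\|w\|^2/\sigma_n^2$ is tight and bounded away from $0$ in probability. That suffices for every $\lambda_n\asymp\sigma_n^2$ case, and there it makes both CGMT and Winsorization unnecessary. After dividing by $\sigma_n^2$, the data-fit term is a constant plus a perturbation of size $O_{\Prob}(\sigma_n^{-1}\|\beta-\beta^*\|+\sigma_n^{-2}\|\beta-\beta^*\|^2)$. This follows from $\|X^\top w\|/(n\sigma_n^2)=O_{\Prob}(\sigma_n^{-1})$ and $\|X\|_{\mathrm{op}}^2/(n\sigma_n^2)=O_{\Prob}(\sigma_n^{-2})$. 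The minimizer therefore collapses onto $\arg\min R(\cdot-\beta_0)=\{\beta_0\}$, given a growth condition on $R$ uniform in $p$ that both you and the paper leave implicit. This argument works with the raw noise, so no perturbation analysis of clipped entries is needed.
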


The proof is given in Section~\ref{sec:proofs-cdd} and
Appendix~D.  The following corollary makes the mechanism
explicit.

\begin{corollary}[The moment mechanism]\label{cor:moment}
Part~\textup{(B)} of Theorem~\ref{thm:cdd} holds because the
constraint $u_i \in [-K,K]$ ensures that the noise coupling
in the CGMT dual satisfies
$|n^{-1}\sum u_i w_i^{(\tau_n)}| \le K\cdot
n^{-1}\sum|w_i^{(\tau_n)}| \xrightarrow{\Prob} K\E[|w|]
< \infty$.
Part~\textup{(U)} holds because the unconstrained dual
$u_i^* \propto w_i$ yields a coupling that involves
$n^{-1}\sum w_i^2 \asymp \sigma_n^2 \to \infty$.
\end{corollary}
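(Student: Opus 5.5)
The corollary is essentially a bookkeeping statement about the CGMT min--max representation, so I would prove it by writing that representation explicitly and tracking where the noise enters. Dualizing each loss term via $L(a)=\sup_{u\in\dom(L^*)}\{ua-L^*(u)\}$ turns the objective in \eqref{eq:m-estimator}, with $w$ replaced by its Winsorized proxy $w^{(\tau_n)}$ as in Definition~\ref{def:winsor}, into
\[
  \min_{\beta}\max_{u\in\dom(L^*)^n}\;
  \frac1n u^\top\bigl(Z\Sigma_x^{1/2}(\beta^*-\beta)\bigr)
  + \frac1n u^\top w^{(\tau_n)} - \frac1n\sum_i L^*(u_i) + \lambda_n R(\beta).
\]
The bilinear Gaussian term is the part that the CGMT replaces by the auxiliary $\|u\|g^\top(\cdot)+\|\Sigma_x^{1/2}(\beta-\beta^*)\|h^\top u$ problem. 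The noise appears only through the linear functional $n^{-1}u^\top w^{(\tau_n)}$, and the whole argument reduces to controlling this functional over the feasible set of $u$.

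For part (B), take $\dom(L^*)\subseteq[-K,K]$. H\"older's inequality gives $|n^{-1}\sum u_iw_i^{(\tau_n)}|\le K\,n^{-1}\sum|w_i^{(\tau_n)}|$ uniformly over the feasible $u$. It then remains to show $n^{-1}\sum|w_i^{(\tau_n)}|\to\E|w|$ in probability. Since $|w_i^{(\tau_n)}|\le|w_i|$ and $|w_i^{(\tau_n)}|\uparrow|w_i|$ as $\tau_n\to\infty$, I would write
\[
  \frac1n\sum|w_i^{(\tau_n)}| = \frac1n\sum|w_i| - \frac1n\sum(|w_i|-\tau_n)_+ .
\]
The first term converges to $\E|w|$ by the weak law of large numbers, because $\alpha>1$. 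The second is nonnegative, and its expectation is $\E(|w|-\tau_n)_+\sim c\tau_n^{1-\alpha}/(\alpha-1)\to0$ by Assumption~\ref{ass:rv}, so Markov's inequality finishes the step. This limit does not involve $\sigma_n^2$, so the auxiliary problem and hence $\mathcal R_L$ are free of $\sigma_n^2$. That yields the first claim, assuming the paper's CGMT analysis of Section~\ref{sec:proofs-cdd} and Appendix~D, which I take as given.

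For part (U), I specialize to the squared loss, where $L^*(u)=u^2/2$. The inner maximization over $u$ is then explicit: $u_i^*=r_i$, where $r_i = x_i^\top(\beta^*-\beta)+w_i^{(\tau_n)}$ is the residual. Thus $u^*$ contains $w^{(\tau_n)}$ additively. Substituting back, the coupling becomes $n^{-1}\sum u_i^*w_i^{(\tau_n)}$, which equals $n^{-1}\|w^{(\tau_n)}\|^2$ plus cross terms. The cross terms are $O_\Prob(\sigma_n\cdot\|\beta-\beta^*\|_{\Sigma_x})$ by Gaussian conditioning on $w$. The main term satisfies $n^{-1}\|w^{(\tau_n)}\|^2\asymp\sigma_n^2\to\infty$ by the concentration part of Theorem~\ref{thm:truncation} together with \eqref{eq:sigma-intro}.

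The step I expect to be the main obstacle is this last concentration. At threshold $\tau_n=n^{1/\alpha}$ the truncated square has variance of order $\tau_n^{4-\alpha}/n$, which is of the same order as $\sigma_n^4$, so Chebyshev's inequality alone is borderline. For this reason I would invoke Theorem~\ref{thm:truncation} rather than reprove it. I would use only the two-sided bound $n^{-1}\|w^{(\tau_n)}\|^2=\Theta_\Prob(\sigma_n^2)$, which is all that the statement ``$\asymp$'' requires.
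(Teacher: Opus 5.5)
Your proposal is correct and follows essentially the paper's own argument (Stage~III of Section~\ref{sec:proofs-cdd} and Appendix~\ref{app:cdd}). For (B) it uses the H\"older bound from $|u_i|\le K$ plus a law of large numbers for $n^{-1}\sum|w_i^{(\tau_n)}|$. For (U) it uses the explicit squared-loss dual maximizer carrying $w^{(\tau_n)}$ additively, plus Theorem~\ref{thm:truncation}. Your identity $|w^{(\tau_n)}|=|w|-(|w|-\tau_n)_+$, combined with the WLLN and Markov, justifies the LLN step more cleanly than the paper's appeal to the strong law for a triangular array.

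Using only $n^{-1}\|w^{(\tau_n)}\|^2=\Theta_\Prob(\sigma_n^2)$ is the right choice. At $\tau_n=n^{1/\alpha}$ the ratio $n^{-1}\|w^{(\tau_n)}\|^2/\sigma_n^2$ converges in law to a nondegenerate mean-one limit, a functional of a Poisson point process; this matches the paper's own $\mathcal{O}(1)$ relative-variance computation. So the ratio-to-one claim of Theorem~\ref{thm:truncation}(ii) overstates what holds, and you should not lean on it. The two-sided bound you need can be proved directly. The upper bound follows from Markov's inequality. For the lower bound, note that the number of $i$ with $|w_i|>\delta\tau_n$ is asymptotically $\mathrm{Poisson}(c\delta^{-\alpha})$, hence nonzero with probability tending to one as $\delta\to0$.
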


\begin{remark}[The moment hierarchy]\label{rmk:moment-hierarchy}
The dichotomy generalizes to a continuous classification.
If the conjugate satisfies $L^*(u) \sim c_q|u|^q$ as
$|u| \to \infty$ for some $q \ge 1$, then the optimal dual
variable has magnitude $|u_i^*| = \mathcal{O}(|w_i|^{1/(q-1)})$
(by the first-order condition of the dual), and the noise
coupling involves the $q/(q-1)$-th moment of $w$.  The
estimator therefore achieves bounded risk if and only if
$\alpha \ge q/(q-1)$, or equivalently $q \ge \alpha/(\alpha-1)$.
For $q = 1$ (bounded domain, e.g., Huber), the requirement is
$\alpha > 1$ (finite mean).  For $q = 2$ (quadratic conjugate,
i.e., squared loss), the requirement is $\alpha \ge 2$ (finite
variance).
\end{remark}

\section{Exact risk characterization}\label{sec:risk}

This section develops the quantitative predictions that
underlie the dichotomy.  We begin with the truncation bridge
that connects regularly-varying noise to a diverging-variance
proxy model, then present the exact risk formulas for the
squared-loss class, and conclude with the trichotomy that
combines both classes.

\subsection{The truncation bridge}\label{sec:truncation}

\begin{theorem}[Truncation universality]\label{thm:truncation}
Under Assumption~\ref{ass:rv}, with $\tau_n = n^{1/\alpha}$
and $\sigma_n^2 = \E[(w^{(\tau_n)})^2]$:
\begin{enumerate}[nosep]
  \item[\textup{(i)}]
  $\sigma_n^2 = \frac{2c}{2-\alpha}\,
  n^{(2-\alpha)/\alpha}(1+o(1))$.
  \item[\textup{(ii)}]
  $n^{-1}\|w^{(\tau_n)}\|^2 / \sigma_n^2
  \xrightarrow{\Prob} 1$.
\end{enumerate}
\end{theorem}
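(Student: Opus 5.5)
For part (i), write $\bar F(t) = \Prob(|w|>t)$ and use the layer-cake formula for the second moment of the Winsorized variable. Since $|w^{(\tau)}| = \min(|w|,\tau)$,
\[
  \sigma_n^2 = \E[\min(|w|,\tau_n)^2] = \int_0^{\tau_n} 2t\,\bar F(t)\,dt .
\]
By Assumption~\ref{ass:rv}, for every $\varepsilon>0$ there is $t_0$ such that $|\bar F(t) - c t^{-\alpha}| \le \varepsilon c t^{-\alpha}$ for $t \ge t_0$. Split the integral at $t_0$:
\begin{itemize}[nosep,leftmargin=1.5em]
  \item the piece over $[0,t_0]$ is at most $t_0^2 = O(1)$;
  \item the piece over $[t_0,\tau_n]$ equals
  \[
    (1\pm\varepsilon)\,2c\int_{t_0}^{\tau_n} t^{1-\alpha}\,dt
    = (1\pm\varepsilon)\,\frac{2c}{2-\alpha}\bigl(\tau_n^{2-\alpha} - t_0^{2-\alpha}\bigr).
  \]
\end{itemize}
Here $2-\alpha>0$ and $\tau_n^{2-\alpha} = n^{(2-\alpha)/\alpha}\to\infty$, so the $O(1)$ terms are negligible relative to the main term. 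Letting $\varepsilon\downarrow 0$ gives (i). This is essentially Karamata's theorem in the special case of an asymptotically constant slowly varying factor.

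For part (ii), set $V_i = (w_i^{(\tau_n)})^2$. These are i.i.d. with mean $\sigma_n^2$ and are bounded by $\tau_n^2$. The natural first attempt is Chebyshev, which needs $\Var(n^{-1}\sum V_i)/\sigma_n^4 \to 0$. By the same layer-cake computation,
\[
  \E[V_i^2] = \int_0^{\tau_n} 4t^3\,\bar F(t)\,dt \sim \frac{4c}{4-\alpha}\,\tau_n^{4-\alpha}.
\]
Therefore
\[
  \frac{\Var(n^{-1}\sum V_i)}{\sigma_n^4} \le \frac{\E[V_1^2]}{n\sigma_n^4} \asymp \frac{\tau_n^{4-\alpha}}{n\,\tau_n^{4-2\alpha}} = \frac{\tau_n^{\alpha}}{n} = 1 .
\]
This is bounded but does not vanish. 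The threshold $\tau_n = n^{1/\alpha}$ is exactly critical: at that level there are $O(1)$ exceedances, and the sum $\sum V_i$ is genuinely stable-like.

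This is the main obstacle. An honest analysis goes as follows. Truncate once more at $\delta\tau_n$ for a small fixed $\delta>0$, and handle the two pieces separately.
\begin{itemize}[nosep,leftmargin=1.5em]
  \item \emph{Bulk part} $V_i\ind\{|w_i|\le\delta\tau_n\}$. The same second-moment computation gives a relative variance of order $\delta^{\alpha}$. Its mean is $\sigma_n^2\bigl(\delta^{2-\alpha}+o(1)\bigr)$, up to the constant.
  \item \emph{Top part} $V_i\ind\{|w_i|>\delta\tau_n\}$. The number of indices with $|w_i|>\delta\tau_n$ is asymptotically Poisson with parameter $c\delta^{-\alpha}$. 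By a Poisson point-process argument, the normalized contribution $\tau_n^{-2}\sum V_i\ind\{|w_i|>\delta\tau_n\}$ converges in law to a nondegenerate random variable.
\end{itemize}
Consequently $n^{-1}\|w^{(\tau_n)}\|^2/\sigma_n^2$ converges in distribution to a nondegenerate limit, namely a compound-Poisson-plus-constant variable with mean $1$, and not in probability to $1$. So, as stated with $\tau_n = n^{1/\alpha}$, I expect (ii) to fail.

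To salvage (ii), I would take a slightly smaller threshold $\tau_n = o(n^{1/\alpha})$, for example $\tau_n = n^{1/\alpha}/\log n$. Then $\tau_n^{\alpha}/n\to 0$, the Chebyshev bound above tends to zero, and (ii) follows directly. Part (i) is unchanged, with $n^{(2-\alpha)/\alpha}$ replaced by $\tau_n^{2-\alpha}$. Alternatively, (ii) can be weakened to tightness: $n^{-1}\|w^{(\tau_n)}\|^2/\sigma_n^2 = O_{\Prob}(1)$, bounded away from zero in probability. That weaker form already follows from the variance bound together with the deterministic lower bound given by the bulk part.
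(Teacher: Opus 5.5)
Your part (i) is the paper's argument. The paper writes $\E[(w^{(\tau)})^2]=\E[w^2\ind_{\{|w|\le\tau\}}]+\tau^2\bar F(\tau)$, integrates by parts, notes that the boundary term cancels, and applies Karamata to $\int_0^{\tau}2t\bar F(t)\,dt$. Your identity $|w^{(\tau)}|=\min(|w|,\tau)$ reaches the same integral in one step, and your $\varepsilon$--$t_0$ split is Karamata's theorem for an asymptotically constant slowly varying factor.

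For part (ii) your conclusion is correct, and it is the paper's proof that fails. The paper sets $Y_{n,i}=V_i/\sigma_n^2-1$ and splits $S_n=A_n(M)+B_n(M)$. Everything then rests on the claim $\limsup_n\E[|Y_{n,1}|\ind_{\{|Y_{n,1}|>M\}}]\to0$ as $M\to\infty$, i.e.\ uniform integrability of $V_1/\sigma_n^2$. That claim is false. Fix $M\ge1$ and let $a_n=((M+1)\sigma_n^2)^{1/2}$, so that $a_n\to\infty$ and $a_n/\tau_n\asymp\sqrt{M}\,n^{-1/2}\to0$. For large $n$ the following hold:
\begin{itemize}
\item $\{|Y_{n,1}|>M\}=\{|w_1|>a_n\}$;
\item $V_1=w_1^2$ on the complement of that event;
\item $\E[w^2\ind_{\{|w|\le a\}}]\le\E[\min(|w|,a)^2]\sim\frac{2c}{2-\alpha}a^{2-\alpha}$.
\end{itemize}
Hence
\[
\E\bigl[|Y_{n,1}|\ind_{\{|Y_{n,1}|>M\}}\bigr]
=1-\frac{\E[w_1^2\ind_{\{|w_1|\le a_n\}}]}{\sigma_n^2}-\bar F(a_n)
\;\ge\;1-\Bigl(\frac{a_n}{\tau_n}\Bigr)^{2-\alpha}(1+o(1))-\bar F(a_n),
\]
and the right-hand side tends to $1$.

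So for every fixed $M$, the entire mean of $V_1/\sigma_n^2$ is carried by an event of vanishing probability. Consequently $\E A_n(M)\to-1$, and the Markov bound on $B_n(M)$ yields nothing. Appendix~A reaches the opposite conclusion by inverting a ratio. The quantity $(a_n/\tau_n)^{2-\alpha}$ is the limit of $\int_0^{a_n}2t\bar F/\int_0^{\tau_n}2t\bar F$, whereas $\int_{a_n}^{\tau_n}2t\bar F/\int_0^{\tau_n}2t\bar F\to1$. The paper's own remark that the fluctuations at $\tau_n=n^{1/\alpha}$ are non-degenerate is exactly your point, and no truncation bookkeeping removes it.

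I have two suggestions for your write-up.

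First, you can replace ``I expect (ii) to fail'' by a short proof that needs no point-process machinery. Let $N_n=\#\{i:|w_i|>\tau_n\}\sim\mathrm{Bin}(n,\bar F(\tau_n))$. Since $n\bar F(\tau_n)\to c$, $N_n$ converges in law to $\mathrm{Poisson}(c)$. Each such index contributes exactly $\tau_n^2$, so
\[
R_n:=\frac{\|w^{(\tau_n)}\|^2}{n\sigma_n^2}\;\ge\;\kappa_n N_n,
\qquad
\kappa_n=\frac{\tau_n^2}{n\sigma_n^2}\;\longrightarrow\;\frac{2-\alpha}{2c},
\]
where the limit of $\kappa_n$ uses (i). Take an integer $k$ with $k(2-\alpha)/(2c)>1$. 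Then for some $\eta>0$ you get $\liminf_n\Prob(R_n\ge1+\eta)\ge\Prob(\mathrm{Poisson}(c)\ge k)>0$, so $R_n$ does not converge to $1$ in probability.

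Second, the limit law is not literally compound Poisson plus a constant. Let $\{P_k\}$ be the Poisson process with intensity $c\alpha x^{-\alpha-1}\,dx$ obtained as the limit of $\{|w_i|/\tau_n\}$. Then $R_n$ converges in law to $\frac{2-\alpha}{2c}\sum_k\min(P_k,1)^2$. This is an infinitely divisible law with mean $1$ whose L\'evy measure has infinite mass near $0$. Your bulk-plus-top picture produces it only in the limit $\delta\downarrow0$.

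Your repairs are sound:
\begin{itemize}
\item For $\tau_n\to\infty$ with $\tau_n=o(n^{1/\alpha})$, the Chebyshev ratio is $\asymp\tau_n^{\alpha}/n\to0$, and (i) holds with $\tau_n^{2-\alpha}$ in place of $n^{(2-\alpha)/\alpha}$.
\item At $\tau_n=n^{1/\alpha}$, the tightness version follows on the upper side from Markov, since $\E R_n=1$.
\item On the lower side it follows from Chebyshev applied to the bulk at a small fixed $\delta$, whose relative variance is $O(\delta^\alpha)$.
\end{itemize}
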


\begin{proof}[Proof of~\textup{(i)}]
By definition of the Winsorized variable,
$\E[(w^{(\tau)})^2] = \E[w^2\ind_{\{|w|\le\tau\}}]
+ \tau^2\Prob(|w|>\tau)$.
Applying integration by parts to the first term with the
layer-cake representation yields
$\E[w^2\ind_{\{|w|\le\tau\}}]
= \int_0^\tau 2t\,\bar{F}(t)\,dt - \tau^2\bar{F}(\tau)$,
where $\bar{F}(t) = \Prob(|w|>t)$.  Adding the Winsorization
boundary term $\tau^2\bar{F}(\tau)$ produces a cancellation:
\begin{equation}\label{eq:cancel}
  \E[(w^{(\tau)})^2]
  = \int_0^\tau 2t\,\bar{F}(t)\,dt.
\end{equation}
This cancellation is specific to Winsorization; simple
truncation ($w\ind_{\{|w|\le\tau\}}$) retains the boundary
term.  The integrand $2t\bar{F}(t) = 2c\,t^{1-\alpha}(1+o(1))$
is regularly varying with index $1 - \alpha > -1$, so
Karamata's theorem
\cite[Theorem~1.5.11]{BinghamGoldieTeugels1987} gives
$\int_0^\tau 2t\,\bar{F}(t)\,dt
\sim \frac{2c}{2-\alpha}\,\tau^{2-\alpha}$.
Setting $\tau = n^{1/\alpha}$ yields the result.

The mean correction is handled by the dominated convergence
theorem: since $|w^{(\tau_n)}| \le |w|$ and
$\E[|w|] < \infty$ (as $\alpha > 1$), we have
$|\E[w^{(\tau_n)}]| \le \E[|w|\ind_{\{|w|>\tau_n\}}] \to 0$,
so $(\E[w^{(\tau_n)}])^2 = o(1)$ is negligible compared to
the diverging second moment.
\end{proof}

\begin{proof}[Proof of~\textup{(ii)}]
The concentration of $n^{-1}\sum(w_i^{(\tau_n)})^2$ around
$\sigma_n^2$ requires care, because the standard fourth-moment
method is exactly borderline.  Indeed, the same
integration-by-parts and Karamata argument gives
$\E[(w^{(\tau_n)})^4] \sim \frac{4c}{4-\alpha}\,
n^{(4-\alpha)/\alpha}$, and the relative variance
\[
  \frac{n^{-1}\E[(w^{(\tau_n)})^4]}{(\sigma_n^2)^2}
  \;\asymp\;
  n^{-1+(4-\alpha)/\alpha - 2(2-\alpha)/\alpha}
  = n^0 = \mathcal{O}(1),
\]
which does not tend to zero.  This borderline behavior is not
a proof artifact: the ratio $\tau_n^2/\sigma_n^2 \asymp n$
grows linearly, placing the problem at the exact threshold
where CLT-type normalization produces non-degenerate
fluctuations.

We circumvent this obstruction via a truncation-based
double-limit argument on the triangular array
$\{(w_i^{(\tau_n)})^2/\sigma_n^2\}_{1 \le i \le n}$.
Define $Y_{n,i} = (w_i^{(\tau_n)})^2/\sigma_n^2 - 1$ and
$S_n = n^{-1}\sum Y_{n,i}$.  For any $M > 0$, decompose
$S_n = A_n(M) + B_n(M)$, where
$A_n(M) = n^{-1}\sum Y_{n,i}\ind_{\{|Y_{n,i}| \le M\}}$.
The bounded part satisfies $\Var(A_n(M)) \le M^2/n \to 0$
by Chebyshev's inequality.  The tail part satisfies
$\Prob(|B_n(M)| > \delta) \le \delta^{-1}
\E[|Y_{n,1}|\ind_{\{|Y_{n,1}|>M\}}]$ by Markov's inequality,
and the Karamata structure of the tail ensures that
$\limsup_n \E[|Y_{n,1}|\ind_{\{|Y_{n,1}|>M\}}] \to 0$ as
$M \to \infty$.  Choosing $M$ large and then $n$ large
completes the argument.  Full details are in Appendix~A.
\end{proof}

\subsection{Failure of squared-loss estimation}
\label{sec:l2-failure}

\begin{theorem}[Fisher information diverges]\label{thm:fisher}
Under Assumption~\ref{ass:rv},
$I_n = n/\sigma_n^2 = \Theta(n^{(2\alpha-2)/\alpha})
\to \infty$.
\end{theorem}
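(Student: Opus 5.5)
The claim follows directly from Theorem~\ref{thm:truncation}(i). We take the Fisher information of the Winsorized proxy model, whose noise variance is $\sigma_n^2$, to be $I_n = n/\sigma_n^2$, and substitute the asymptotic formula for $\sigma_n^2$. Theorem~\ref{thm:truncation}(i) gives $\sigma_n^2 = \frac{2c}{2-\alpha}\,n^{(2-\alpha)/\alpha}(1+o(1))$. Hence
\[
  I_n = \frac{n}{\sigma_n^2}
  = \frac{2-\alpha}{2c}\, n^{1-(2-\alpha)/\alpha}\,(1+o(1))
  = \frac{2-\alpha}{2c}\, n^{(2\alpha-2)/\alpha}\,(1+o(1)).
\]
The exponent simplifies because $1-(2-\alpha)/\alpha = (\alpha-2+\alpha)/\alpha = (2\alpha-2)/\alpha$. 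The leading constant $(2-\alpha)/(2c)$ is strictly positive and finite because $\alpha\in(1,2)$ and $c>0$, and $(1+o(1))$ is bounded away from $0$ and $\infty$ for $n$ large. This yields the $\Theta(n^{(2\alpha-2)/\alpha})$ statement, in fact with an explicit limiting constant.

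For divergence, note that $\alpha>1$ gives $2\alpha-2>0$, so the exponent $(2\alpha-2)/\alpha$ lies in $(0,1)$. Therefore $n^{(2\alpha-2)/\alpha}\to\infty$, and so $I_n\to\infty$.

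The only point requiring care is the definition: the statement identifies $I_n$ with $n/\sigma_n^2$. I would remark that this is the Fisher information about a location or regression coefficient per unit design scale under the Gaussian proxy with variance $\sigma_n^2$, summed over $n$ observations. If one prefers a distribution-free reading, it is the inverse of the oracle variance of the sample mean of the proxy noise. No genuine obstacle arises, since all the analytic work (Karamata's theorem and the Winsorization cancellation~\eqref{eq:cancel}) is already contained in Theorem~\ref{thm:truncation}(i).
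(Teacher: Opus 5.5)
Your proof is correct and is essentially the paper's argument: substitute $\sigma_n^2 = \frac{2c}{2-\alpha}\,n^{(2-\alpha)/\alpha}(1+o(1))$ from Theorem~\ref{thm:truncation}(i) into $n/\sigma_n^2$, then use $(2\alpha-2)/\alpha>0$ for $\alpha>1$. The only difference is that the paper motivates $I_n=\Theta(n/\sigma_n^2)$ through a per-observation location-family bound $I_1=\mathcal{O}(1/\sigma_n^2)$, which is not valid in general since Cram\'er--Rao gives $I_1\ge 1/\sigma_n^2$. You instead take the identification $I_n=n/\sigma_n^2$ in the statement as a definition, which is cleaner and also gives the explicit limiting constant $(2-\alpha)/(2c)$.
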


\begin{proof}
The per-observation Fisher information for a location family
with variance $\sigma_n^2$ satisfies
$I_1 = \mathcal{O}(1/\sigma_n^2)$.  With $n$ independent
observations, $I_n = n \cdot I_1 = \Theta(n/\sigma_n^2)$.
Since $(2\alpha-2)/\alpha > 0$ for $\alpha > 1$, the result
follows.
\end{proof}

\begin{theorem}[Divergence of $L_2$-estimation without transfer]
\label{thm:l2-diverge}
Under Assumptions~\ref{ass:design}--\ref{ass:rv} with
$p/n \to \gamma > 0$ and $\sigma_n^2 \to \infty$:
\begin{enumerate}[nosep]
  \item[\textup{(i)}] The OLS estimator (when $\gamma < 1$)
  has risk $\Theta(\sigma_n^2) \to \infty$.
  \item[\textup{(ii)}] Ridge regression with fixed $\lambda > 0$
  has risk $\Theta(\sigma_n^2) \to \infty$.
  \item[\textup{(iii)}] Any regularized estimator with
  $\lambda_n = o(\sigma_n^2)$ has diverging risk.
  \item[\textup{(iv)}] With noise-adapted regularization
  $\lambda_n = \tilde\lambda\sigma_n^2$ centered at the
  origin, the risk converges to
  $p^{-1}\|\beta^*\|_{\Sigma_x}^2$.
\end{enumerate}
\end{theorem}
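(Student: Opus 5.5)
The plan is to work throughout with the Winsorized proxy model $y = X\beta^* + w^{(\tau_n)}$ and to control the noise only through Theorem~\ref{thm:truncation}. Part~(ii) of that result gives $n^{-1}\|w^{(\tau_n)}\|^2 = \sigma_n^2(1+o_{\Prob}(1))$. Since $w^{(\tau_n)}$ is independent of $X$, the estimation error splits into a noise part that is linear in $w^{(\tau_n)}$ and a deterministic-signal part. For (i) and (ii) the estimator is explicit: $\hat\beta-\beta^* = (X^\top X+n\lambda I)^{-1}X^\top w - n\lambda(X^\top X+n\lambda I)^{-1}\beta^*$, with $\lambda=0$ for OLS. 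Conditioning on $w$ and using the rotational invariance of $Z$, the noise contribution to $p^{-1}\|\hat\beta-\beta^*\|_{\Sigma_x}^2$ equals $\frac{\|w\|^2}{p}\,\mathrm{tr}$-type spectral functionals of $Z^\top Z/n$ divided by $n$, up to $o(1)$ relative error.

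For OLS with $\gamma<1$ this gives the classical value $\frac{\|w\|^2}{n}\cdot\frac{1}{p}\mathrm{tr}(\Sigma_x^{1/2}(X^\top X)^{-1}\Sigma_x^{1/2})\cdot n \to \sigma_n^2\,\frac{1}{1-\gamma}(1+o(1))$, so the risk is $\Theta(\sigma_n^2)$. For ridge with fixed $\lambda$, the same computation with the Marchenko--Pastur/deterministic-equivalent limit of $p^{-1}\mathrm{tr}\,\Sigma_x X^\top X(X^\top X+n\lambda)^{-2}$ produces a strictly positive constant $v(\lambda,\gamma,\mu)$. This constant is positive because $\mu$ is supported in $(0,\infty)$ and the spectrum of $X^\top X/n$ is not concentrated at $0$. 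The bias term is $O(1)$, since $p^{-1}\|\beta^*\|^2$ is assumed bounded, so the risk is $\Theta(\sigma_n^2)$. The cross term vanishes in probability by conditional Gaussian-free centering: $w$ has mean $o(1)$ and is independent of $X$, so a conditional second-moment bound suffices.

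For (iii) and (iv), where there is a general coercive $R$ and no closed form, I would invoke the CGMT machinery used for Theorem~\ref{thm:risk-general}. Rescaling $\beta-\beta^*=\sigma_n\theta$ turns the objective into a fixed-variance problem with effective regularization $(\lambda_n/\sigma_n^2)\,R(\beta^*+\sigma_n\theta)$. If $\lambda_n=o(\sigma_n^2)$, the regularizer weight vanishes in the rescaled coordinates, so the auxiliary optimization reduces to an essentially unregularized least-squares problem. Its error $\|\theta\|$ is bounded below by a positive constant: for $\gamma<1$ this is the OLS value, and for $\gamma\ge1$ it is the min-norm-type interpolation lower bound. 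Hence the risk is at least $c\,\sigma_n^2\to\infty$.

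If instead $\lambda_n=\tilde\lambda\sigma_n^2$, I would write the squared-loss term as $\frac{1}{2n}\|X(\beta^*-\beta)+w\|^2$ and divide the whole objective by $\sigma_n^2$. The noise part contributes a $\beta$-independent constant $\approx 1/2$. The data-fit term $\frac{1}{2n\sigma_n^2}\|X(\beta^*-\beta)\|^2$ and the cross term $\frac{1}{n\sigma_n^2}w^\top X(\beta-\beta^*)$ are $O(\|\beta-\beta^*\|/\sigma_n)$ uniformly on bounded sets, by operator-norm bounds on $X/\sqrt n$. So the normalized objective converges uniformly on compacts to $\tilde\lambda R(\beta)$, which is minimized at the origin when $R$ is centered there. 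Coercivity of $R$ gives tightness of the minimizers, and strict convexity, or a uniqueness assumption at $0$, then gives $\hat\beta\to0$ in the $p^{-1/2}$-normalized sense. Consequently $p^{-1}\|\hat\beta-\beta^*\|_{\Sigma_x}^2\to p^{-1}\|\beta^*\|_{\Sigma_x}^2$, which is the same collapse that yields $q_\Sigma$ in Theorem~\ref{thm:floor}, with $\beta_0=0$.

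I expect the main obstacle to be making this last argument rigorous in the proportional regime. Uniform convergence must hold in the normalized norm $p^{-1/2}\|\cdot\|$ rather than on fixed compacts. In addition, the cross term $w^\top X(\beta-\beta^*)/(n\sigma_n^2)$ scales like $\sqrt{p}\,\|\beta-\beta^*\|/(\sqrt n\,\sigma_n)$ and is only $o(1)$ after normalization. My first attempt would bound it by $\|X\|_{op}\|w\|/(n\sigma_n^2)=O_{\Prob}(\sigma_n^{-1})$ per unit of $\|\beta-\beta^*\|/\sqrt p$, which vanishes. If uniqueness of the minimizer of $R$ fails, the plan is to appeal instead to the fixed-point system of Theorem~\ref{thm:risk-general} in the limit $\tilde\sigma\to\infty$.
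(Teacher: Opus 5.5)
\textbf{Part (iii): the rescaling step fails.} After setting $\beta-\beta^*=\sigma_n\theta$ and dividing by $\sigma_n^2$, the penalty is $(\lambda_n/\sigma_n^2)\,R(\beta^*+\sigma_n\theta)$. But $R$ is now evaluated at a point of size $\sigma_n$, so this term is not $O(1)$.
\begin{itemize}
  \item For ridge it equals $\tfrac{\lambda_n}{2}\|\theta+\beta^*/\sigma_n\|^2$, so the effective weight on $\theta$ is $\lambda_n$, not $\lambda_n/\sigma_n^2$.
  \item For a $k$-homogeneous $R$ the effective weight is $\lambda_n\sigma_n^{k-2}$.
  \item For a general coercive convex $R$ there is no single threshold.
\end{itemize}
So the auxiliary problem does not become ``essentially unregularized''. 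This cannot be repaired, because the claim fails as stated. Take ridge with $\lambda_n=\sigma_n^{3/2}=o(\sigma_n^2)$ and $\hat\Sigma=X^\top X/n$. The noise part of $\|\hat\beta-\beta^*\|_{\Sigma_x}^2$ satisfies
\[
  n^{-2}\,w^\top X(\hat\Sigma+\lambda_n I)^{-1}\Sigma_x(\hat\Sigma+\lambda_n I)^{-1}X^\top w
  \le \frac{\|w\|^2}{n}\cdot\frac{\|X\|_{\mathrm{op}}^2}{n}\cdot\frac{\|\Sigma_x\|_{\mathrm{op}}}{\lambda_n^2}
  = O_{\Prob}(\sigma_n^2/\lambda_n^2)\to 0 .
\]
The bias part is at most $\|\Sigma_x\|_{\mathrm{op}}\|\beta^*\|^2$. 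Hence the risk stays bounded under either normalization of the risk and of $\beta^*$; for ridge the divergence threshold is $\lambda_n\asymp\sigma_n$, not $\sigma_n^2$. The paper's one-line justification (failure of Lemma~\ref{lem:compact} when $\tilde\lambda\to0$) does not close this either. The failure of a sufficient condition for confinement does not prove divergence.

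\textbf{Parts (i)--(ii): a spurious factor of $n$.} You follow the same bias--variance route as the paper, but your OLS display has an extra factor $n$. With $X=Z\Sigma_x^{1/2}$,
\[
  \|\hat\beta-\beta^*\|_{\Sigma_x}^2=w^\top Z(Z^\top Z)^{-2}Z^\top w,
\]
whose conditional mean given $w$ is $\frac{\|w\|^2}{n}\,\mathrm{tr}\bigl((Z^\top Z)^{-1}\bigr)\approx\sigma_n^2\,p/(n-p)$.
\begin{itemize}
  \item In the normalization $p^{-1}\|\cdot\|_{\Sigma_x}^2$ that you adopt, the OLS risk is therefore about $\sigma_n^2/((1-\gamma)n)$. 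This is of order $1/I_n\to0$ in the notation of Theorem~\ref{thm:fisher}.
  \item The paper's own ridge formula~\eqref{eq:ridge-risk} carries the same factor $\sigma_n^2/n$.
  \item The $\Theta(\sigma_n^2)$ claims hold only for the unnormalized risk $\|\hat\beta-\beta^*\|_{\Sigma_x}^2$ with $\|\beta^*\|=O(1)$, where OLS gives $\sigma_n^2\gamma/(1-\gamma)$.
\end{itemize}
You must then use that convention throughout. Your bias bound currently relies on $p^{-1}\|\beta^*\|^2=O(1)$, which belongs to the other convention.

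\textbf{Part (iv): a valid alternative route.} In the unnormalized convention, your direct argument is a sound and more elementary alternative to the paper's appeal to the proximal collapse of Theorem~\ref{thm:floor}. You show the $\sigma_n^2$-normalized objective converges to $\tilde\lambda R$ and then use argmin continuity. This needs a lower bound $R(\beta)-R(0)\ge\psi(\|\beta\|)$ with $\psi$ independent of $p$, which holds for ridge, $\ell_1$ and group Lasso. That is the uniformity issue you flagged. Pose it in the unnormalized norm: the $p^{-1/2}$-normalized version of the bound fails for $\ell_1$.
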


\begin{proof}
Parts~(i)--(ii) follow from the standard bias--variance
decomposition in the proportional regime, where the variance
component is proportional to $\sigma_n^2$.  Part~(iii) follows
from the failure of the compactification argument
(Lemma~\ref{lem:compact}) when
$\tilde\lambda = \lambda_n/\sigma_n^2 \to 0$: the regularizer
cannot confine the estimator.  Part~(iv) is a special case of
Theorem~\ref{thm:floor} with $\beta_0 = 0$.
\end{proof}

Theorems~\ref{thm:fisher} and~\ref{thm:l2-diverge} together
reveal that the failure of squared-loss estimation under
diverging noise is not an information-theoretic limitation but
a structural vulnerability of the loss function.  The data
contain sufficient information for consistent estimation;
the squared loss simply cannot extract it.  This observation
is central to the motivation for the Conjugate Domain
Dichotomy.

\subsection{Noise-adapted regularization and exact risk}
\label{sec:exact-risk}

To obtain meaningful risk bounds for the squared-loss class,
the regularization strength must scale with the noise power.

\begin{definition}[Noise-adapted regularization]
\label{def:noise-adapt}
The \emph{effective regularization strength} is
$\tilde\lambda = \lambda_n / \sigma_n^2$, treated as an
$\mathcal{O}(1)$ constant.
\end{definition}

This scaling ensures that the regularization grows in step
with the noise, producing a well-defined trade-off between
bias and variance in the limit.  When $\alpha = 2$ (finite
variance), $\sigma_n^2 = \Theta(1)$ and the standard
fixed-$\lambda$ regime is recovered.

\begin{theorem}[Exact risk at finite noise level]
\label{thm:risk-general}
Under Assumptions~\ref{ass:design}--\ref{ass:rv}, for the
transfer-regularized estimator
\[
  \hat\beta = \arg\min_{\beta \in \R^p}\;
  \frac{1}{2n}\|y - X\beta\|^2
  + \tilde\lambda\,\sigma_n^2\,R(\beta - \beta_0)
\]
with coercive convex $R$ and $p/n \to \gamma$, the risk
$p^{-1}\|\hat\beta - \beta^*\|_{\Sigma_x}^2$ converges in
probability to a deterministic limit $\mathcal{R}$ characterized
by the fixed-point equation $\tau^2 = 1 + \gamma\mathcal{R}(\tau)$,
where
\begin{equation}\label{eq:fp}
  \mathcal{R}(\tau) = \E_{S,\zeta}\!\left[S\left(
    \prox_{\tilde\lambda\sigma_n^2 S^{-1} r}\!\left(
      \Delta_S - \frac{\sigma_n\tau}{\sqrt{S}}\,\zeta
    \right) - \Delta_S\right)^{\!2}\,\right],
\end{equation}
with $S \sim \mu$ (the limiting spectral distribution of
$\Sigma_x$), $\zeta \sim \mathcal{N}(0,1)$ independent, and
$\Delta_S$ denoting the projection of
$\Delta = \beta^* - \beta_0$ onto the eigenspace corresponding
to eigenvalue $S$.

For ridge regularization ($R = \frac{1}{2}\|\cdot\|^2$), the
risk admits the closed-form Stieltjes representation
\begin{equation}\label{eq:ridge-risk}
  \mathcal{R}_{\mathrm{ridge}}
  = \mu^2\,\E_S\!\left[\frac{S\Delta_S^2}{(Sv+\mu)^2}\right]
  + \frac{\sigma_n^2}{n}\,v\,
  \E_S\!\left[\frac{S^2}{(Sv+\mu)^2}\right],
\end{equation}
where $\mu = \tilde\lambda\sigma_n^2$ and $v$ is the unique
positive solution of
$v^{-1} = 1 + \gamma\E_S[S(Sv+\mu)^{-1}]$.
\end{theorem}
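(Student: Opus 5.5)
The approach is the standard CGMT route, run on the Winsorized proxy model and normalized by $\sigma_n$. First I replace $w$ by $w^{(\tau_n)}$. Theorem~\ref{thm:truncation} gives $n^{-1}\|w^{(\tau_n)}\|^2/\sigma_n^2\to 1$. Separately, $\Prob(w\neq w^{(\tau_n)}\text{ for some } i)\le n\bar F(\tau_n)\to c$, which is not $o(1)$. So I also need a stability step: on the event that a bounded number of coordinates differ, the estimator changes by $o(\sigma_n)$ in normalized norm. I would verify this through the strong convexity supplied by the $\sigma_n^2$-scaled regularizer, after rescaling.

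Concretely, I set $\theta=\beta-\beta^*$, $\Delta=\beta^*-\beta_0$ and $\bar w = w^{(\tau_n)}/\sigma_n$, and divide the objective by $\sigma_n^2$. This yields
\[
\min_\theta \frac{1}{2n\sigma_n^2}\|\bar w\sigma_n - X\theta\|^2+\tilde\lambda R(\theta+\Delta),
\]
where the noise now has unit normalized energy. I write the squared loss through its conjugate,
\[
\tfrac12\|a\|^2=\max_u\{u^\top a-\tfrac12\|u\|^2\},
\]
which produces the bilinear form $u^\top Z\Sigma_x^{1/2}\theta$. This is the (U) case of Theorem~\ref{thm:cdd}, so the dual is unconstrained.

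Before applying the CGMT I need compact constraint sets. For $\theta$ I would take this from the compactification Lemma~\ref{lem:compact}, using the coercivity of $R$ at scale $\tilde\lambda$. For $u$ I use the explicit maximizer $u=a$. With these in hand I pass to the auxiliary optimization
\[
\min_\theta\max_u\; \|u\|g^\top\Sigma_x^{1/2}\theta+\|\Sigma_x^{1/2}\theta\|h^\top u-u^\top\bar w\sigma_n-\tfrac12\|u\|^2+\ldots
\]

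The next step is to scalarize the auxiliary problem. Optimizing over the direction of $u$ involves $\|\,\|\Sigma_x^{1/2}\theta\|h-\bar w\sigma_n\|/\sqrt n$. By independence of $h$ and $\bar w$, together with Theorem~\ref{thm:truncation}(ii), this concentrates at
\[
\sqrt{\|\Sigma_x^{1/2}\theta\|^2/n+\sigma_n^2}.
\]
This is where the "$1+$" in $\tau^2=1+\gamma\mathcal R$ comes from, once noise is measured in units of $\sigma_n^2$. I then apply the square-root trick $\sqrt{x}=\min_{\tau>0}\{x/(2\tau)+\tau/2\}$. The $\theta$-minimization decouples in the eigenbasis of $\Sigma_x$ into Moreau envelopes of $\tilde\lambda\sigma_n^2 S^{-1}r$ evaluated at $\Delta_S-\sigma_n\tau\zeta/\sqrt S$. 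By the Bai--Silverstein empirical spectral convergence and a law of large numbers over coordinates, this gives the scalar deterministic min-max whose stationarity conditions are~\eqref{eq:fp}. Uniqueness of the saddle point follows from strict convexity in $\tau$, and the CGMT's closeness-of-optimizers argument then transfers concentration of $\theta$ around the AO solution to the risk.

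For ridge, the prox is linear, $(\Delta_S-\sigma_n\tau\zeta/\sqrt S)\cdot S/(S+\mu/\ldots)$. Substituting and eliminating $\tau$ in favour of $v$ gives the Stieltjes identity $v^{-1}=1+\gamma\E_S[S/(Sv+\mu)]$ and the bias plus variance form~\eqref{eq:ridge-risk}, where the variance term carries $\sigma_n^2/n$.

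The main obstacle is that all the CGMT inputs are $n$-dependent, because $\sigma_n\to\infty$. The prox parameter diverges, so the usual fixed-point continuity and uniform-convergence arguments for the AO must be made uniform in the diverging scale. I would handle this by working throughout with the normalized variables $\theta/\sigma_n$, in which the problem has an $O(1)$ parameterization except for the signal $\Delta/\sigma_n$, which tends to $0$. I would then prove equicontinuity of the scalar objective in $(\tau,\beta)$ directly, rather than appealing to a fixed limit.
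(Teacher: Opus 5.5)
Your outline is the paper's own argument: Winsorized proxy, $\sigma_n^2$-normalization, Lemma~\ref{lem:compact}, CGMT, concentration of the noise energy via Theorem~\ref{thm:truncation}(ii), scalarization, and the linear prox for ridge. It inherits that argument's central gap. Theorem~\ref{thm:truncation}(ii) is false, and your own remark $n\bar F(\tau_n)\to c$, which the paper never confronts, shows why. The number $N_n$ of clipped coordinates is asymptotically Poisson$(c)$. Each clipped coordinate contributes exactly $\tau_n^2$ to $\|w^{(\tau_n)}\|^2$, and $\tau_n^2/(n\sigma_n^2)\to(2-\alpha)/(2c)$. Hence $V_n:=n^{-1}\|w^{(\tau_n)}\|^2/\sigma_n^2\ge\frac{2-\alpha}{2c}N_n(1+o(1))$, so $\liminf_n\Prob(V_n>2)>0$. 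The $\Theta(1)$ relative variance the paper computes is genuine. Its double-limit workaround fails because the ratio of $\int_{a_n}^{\tau_n}2t\bar F(t)\,dt$ to $\int_0^{\tau_n}2t\bar F(t)\,dt$ behaves like $1-(a_n/\tau_n)^{2-\alpha}\to1$, not like $(a_n/\tau_n)^{2-\alpha}$. So the quantity you concentrate at $\sqrt{\|\Sigma_x^{1/2}\theta\|^2/n+\sigma_n^2}$ actually contains $\sigma_n^2V_n$, with $V_n$ random, in place of $\sigma_n^2$. The repair is to run the CGMT conditionally on $w$ and carry $V_n$ as a tight random parameter. This is legitimate because $w$ is independent of $Z$ and enters only additively. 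It works directly with the raw noise, for which $n^{-1}\|w\|^2/\sigma_n^2$ is also tight, so you can drop both the Winsorization and your stability step. That is fortunate, because the stability step has two problems. It targets an $o(\sigma_n)$ change, whereas an $O(1)$ risk requires $p^{-1/2}\|\cdot\|_{\Sigma_x}=o(1)$. It also relies on strong convexity that a general coercive $R$ (e.g.\ $\ell_1$ with $\gamma>1$) does not supply.

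The second gap, also present in the paper, is your assertion that the stationarity conditions of the scalar problem are~\eqref{eq:fp}. Carry out your AO with $b=\|u\|$ and $t$ from the square-root trick. In the eigenbasis this gives $\prox_{\eta r}\bigl(\Delta_S-\sqrt{(\gamma\mathcal R+\sigma_n^2V_n/n)/S}\;\zeta\bigr)$, with $\eta=\tilde\lambda\sigma_n^2t/(bS)$ in the unnormalized problem. So there are two order parameters, the step size depends on their ratio, and the per-coordinate noise level is $\sqrt{(\gamma\mathcal R+\sigma_n^2V_n/n)/S}$ rather than $\sigma_n\sqrt{(1+\gamma\mathcal R)/S}$. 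In this system $V_n$ enters only through $\sigma_n^2V_n/n\to0$. Your ridge step exposes the mismatch. Inserting $x\mapsto Sx/(S+\mu)$ into~\eqref{eq:fp} gives $\mu^2\E[S\Delta_S^2/(S+\mu)^2]+\sigma_n^2\tau^2\E[S^2/(S+\mu)^2]$. There is no factor $1/n$ and nothing that could become $v$, so~\eqref{eq:ridge-risk} cannot be reached from~\eqref{eq:fp} by eliminating $\tau$.

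The discrepancy is not cosmetic. Take $\Sigma_x=I$, $\gamma<1$, $\beta_0=\beta^*$, and the coercive convex $R(x)=\sum_j(|x_j|-1)_+$. By rotational invariance the OLS error has sup-norm $O_P(n^{1/\alpha-1}\sqrt{\log n})$, so it lies where the penalty vanishes. Hence $\hat\beta$ equals OLS with probability tending to one, and the risk is $O_P(n^{2/\alpha-2})\to0$. Yet in~\eqref{eq:fp}, for every $\tau\ge1$ the prox input exceeds $1$ in modulus with probability tending to one. On that event the prox output has modulus at least $1$, which forces $\mathcal R\ge1-o(1)$. So no correct scalarization yields~\eqref{eq:fp} for all coercive $R$.

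What your method can deliver is the collapse to $q_\Sigma$ when $\arg\min R=\{0\}$. That is all that~\eqref{eq:fp} (at its bounded fixed point) and~\eqref{eq:ridge-risk} retain in the limit. To prove it, work at the scale of $\theta$ itself rather than $\theta/\sigma_n$. In $\phi=\theta/\sigma_n$ the penalty $\tilde\lambda R(\sigma_n\phi+\Delta)$ is not $O(1)$. Moreover, since $\hat\phi\approx-\Delta/\sigma_n$, you would need $o(1/\sigma_n)$ accuracy in $\phi$ to recover an $O(1)$ risk.
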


The proof proceeds through the CGMT framework and is given
in Section~\ref{sec:proofs-risk} and Appendix~B--C.

\subsection{The universal risk floor}\label{sec:floor}

\begin{theorem}[Universal risk floor]\label{thm:floor}
Under Assumptions~\ref{ass:design}--\ref{ass:rv}, for any
coercive convex $R$ with $\arg\min R = \{0\}$ and any
$\tilde\lambda > 0$:
\[
  \frac{1}{p}\|\hat\beta - \beta^*\|_{\Sigma_x}^2
  \;\xrightarrow{\Prob}\;
  q_\Sigma
  = \frac{1}{p}\|\beta^* - \beta_0\|_{\Sigma_x}^2.
\]
This limit is independent of the regularizer $R$, the
effective strength $\tilde\lambda$, and the noise distribution
beyond the tail index $\alpha$.
\end{theorem}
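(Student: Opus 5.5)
The plan is to obtain Theorem~\ref{thm:floor} as a limit of the fixed-point characterization in Theorem~\ref{thm:risk-general}, taking $\sigma_n\to\infty$ (guaranteed by Theorem~\ref{thm:truncation}(i)) inside \eqref{eq:fp}. Write $t_n=\tilde\lambda\sigma_n^2S^{-1}$ for the prox step size and $x_n=\Delta_S-\sigma_n\tau\zeta/\sqrt{S}$ for its argument. The key scaling fact is that $|x_n|^2/t_n=O(1)$, while $|x_n|/t_n=O(\sigma_n^{-1})\to0$, for fixed $(S,\zeta,\tau)$. Since $\mu$ is compactly supported in $(0,\infty)$, $S$ is bounded above and away from zero, so these bounds are uniform in $S$.

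\emph{Step 1 (prox collapse).} Let $z_n=\prox_{t_n r}(x_n)$ and compare the prox objective at $z_n$ with its value at $0$:
\[
  r(z_n)+\frac{(z_n-x_n)^2}{2t_n}\le r(0)+\frac{x_n^2}{2t_n}.
\]
This gives $r(z_n)-r(0)\le (2z_nx_n-z_n^2)/(2t_n)\le x_n^2/(2t_n)=O(1)$. So $z_n$ lies in a fixed sublevel set of $r$, which is bounded by coercivity. Feeding $|z_n|\le C$ back in gives $r(z_n)-r(0)\le C|x_n|/t_n\to0$. Lower semicontinuity and $\arg\min r=\{0\}$ then force $z_n\to0$. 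Consequently the integrand $S(z_n-\Delta_S)^2$ converges pointwise to $S\Delta_S^2$.

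\emph{Step 2 (passage to the limit and the fixed point).} Dominated convergence applies with dominating function $2S(C(\tau,\zeta)^2+\Delta_S^2)$. Here $C$ grows at most polynomially in $|\zeta|$, since the sublevel radius depends only on the bound $x_n^2/t_n\lesssim \tau^2\zeta^2/\tilde\lambda+o(1)$. This yields $\mathcal R(\tau)\to\E_S[S\Delta_S^2]=q_\Sigma$, locally uniformly in $\tau$. The convergence is uniform because everything is monotone and continuous in $\tau$ on compacts.

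The fixed-point equation $\tau^2=1+\gamma\mathcal R(\tau)$ then has solutions $\tau_n$ that stay bounded. Indeed, $\mathcal R(\tau)\le 2\E[S(z^2+\Delta_S^2)]$ grows at most like $\tau^2/\tilde\lambda$ times an $S$-dependent constant, and for large $\tau$ the uniform-in-$\tau$ version of Step~1 shows $\mathcal R(\tau)$ is bounded uniformly in $n$. Bounded $\tau_n$ have limit points $\tau_\infty^2=1+\gamma q_\Sigma$, and along any such subsequence $\mathcal R(\tau_n)\to q_\Sigma$. Finally, $\E_S[S\Delta_S^2]$ equals $p^{-1}\|\beta^*-\beta_0\|^2_{\Sigma_x}$ by the definition of $\Delta_S$ as the spectral projection.

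\emph{Main obstacle.} The real difficulty is that Theorem~\ref{thm:risk-general} is a statement at a \emph{fixed} noise level, whereas here $\sigma_n$ moves with $n$. We need the CGMT concentration to hold with an $n$-dependent prox step, which requires an interchange of the proportional limit and the $\sigma_n\to\infty$ limit.

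I would handle this by rescaling the problem by $\sigma_n$. Dividing the objective by $\sigma_n^2$ turns the noise into $w^{(\tau_n)}/\sigma_n$, which has empirical second moment tending to $1$ by Theorem~\ref{thm:truncation}(ii). The regularizer becomes a fixed $\tilde\lambda R(\beta-\beta_0)$, and the signal $X\Delta/\sigma_n$ becomes vanishing.

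In this normalized form, the compactification argument (Lemma~\ref{lem:compact}) applies with constants independent of $n$, so the CGMT error bounds are uniform. The limiting auxiliary problem is then exactly the one analyzed in Steps~1--2, namely the same problem with vanishing signal. Its solution satisfies $\hat\beta-\beta_0\to0$ in the normalized $\Sigma_x$-norm, which is precisely the statement $p^{-1}\|\hat\beta-\beta^*\|_{\Sigma_x}^2\to q_\Sigma$.

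If the uniform-CGMT route proves delicate, I would fall back on a direct deterministic bound. The idea is to use the optimality comparison of Step~1 at the level of the full objective, together with operator-norm bounds on $X$ and the scaling $n^{-1}\|X^\top w^{(\tau_n)}\|=O_\Prob(\sigma_n)$, to show $\|\hat\beta-\beta_0\|/\sqrt p\to0$ in probability.
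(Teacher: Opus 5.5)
Your proposal is correct and follows essentially the paper's route. You take the fixed-point characterization of Theorem~\ref{thm:risk-general}, observe that the prox step $\tilde\lambda\sigma_n^2/S$ outgrows its $\mathcal{O}(\sigma_n)$ argument so that the prox output collapses to $\arg\min r=\{0\}$, and conclude $v_j^*\to-\Delta_{s_j}$, risk $q_\Sigma$, and $(\tau^*)^2=1+\gamma q_\Sigma$, exactly as in the paper and Appendix~C. Your sublevel-set comparison in Step~1 is a more careful version of the paper's $\mathcal{O}(\sigma_n/\sigma_n^2)$ rate claim. That rate fails for, e.g., $r(z)=z^4$, where the output is only $\mathcal{O}(\sigma_n^{-1/3})$, although the limit is unaffected. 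The limit-interchange issue you flag is one the paper passes over, implicitly relying on the same $\sigma_n^2$-normalized compactification you propose.
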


\begin{proof}
In the CGMT auxiliary optimization, the proximal step size
for coordinate $j$ in the eigenbasis of $\Sigma_x$ is
$\eta_j = \tilde\lambda\sigma_n^2/s_j$, where $s_j$ is the
$j$-th eigenvalue of $\Sigma_x$.  Since $\sigma_n^2 \to \infty$
and $s_j$ is bounded away from zero (by the compact support
of $\mu$), we have $\eta_j \to \infty$ for every $j$.

The asymptotic behavior of the proximal operator under
diverging step size is well known: for any proper closed convex
function $r$ with $\arg\min r = \{0\}$,
$\prox_{\eta r}(x) \to 0$ as $\eta \to \infty$ for any $x$
\cite[Proposition~12.29]{BauschkeCombettes2017}.  In our
setting, the argument of the proximal operator is
$\Delta_{s_j} - \sigma_n\tau\zeta_j/\sqrt{s_j}
= \mathcal{O}(\sigma_n)$, while the step size is
$\mathcal{O}(\sigma_n^2)$.  Therefore the proximal output
is $\mathcal{O}(\sigma_n/\sigma_n^2) = \mathcal{O}(1/\sigma_n)
\to 0$.

Setting $\phi_j = v_j + \Delta_{s_j}$ (the estimator's
deviation from the prior in coordinate $j$), we obtain
$\phi_j^* \to 0$, i.e., $v_j^* \to -\Delta_{s_j}$.
The risk becomes
\[
  \frac{1}{p}\sum_j s_j(v_j^*)^2
  \;\to\;
  \frac{1}{p}\sum_j s_j\Delta_{s_j}^2
  = \frac{1}{p}\|\Delta\|_{\Sigma_x}^2
  = q_\Sigma. \qedhere
\]
\end{proof}

The universality of the risk floor across regularizers has a
clear interpretation: under diverging noise, the proximal
operator with diverging step size maps all inputs to the
minimizer of $r$, erasing any dependence on the regularizer's
geometry.  Ridge, Lasso, elastic net, and group Lasso all
produce the same limiting risk.  The regularizer matters only
in the transient regime at finite noise levels, where it
governs the rate of convergence to $q_\Sigma$.

\subsection{The loss--risk trichotomy}\label{sec:trichotomy}

Combining the bounded-domain result
(Theorem~\ref{thm:cdd}(B)) with the exact squared-loss
characterization
(Theorems~\ref{thm:risk-general}--\ref{thm:floor}) and the
Huber analysis yields the following trichotomy.

\begin{theorem}[Loss--risk trichotomy]\label{thm:trichotomy}
Under Assumptions~\ref{ass:design}--\ref{ass:rv}:
\begin{enumerate}
  \item[\textup{(a)}] \emph{Squared loss without transfer.}
  The risk either diverges or converges to
  $p^{-1}\|\beta^*\|_{\Sigma_x}^2 > 0$.
  \item[\textup{(b)}] \emph{Huber loss without transfer.}
  The risk converges to
  $\mathcal{R}_H(k, \lambda, \gamma, \mu) < \infty$,
  which does not depend on $\sigma_n^2$ but satisfies
  $\mathcal{R}_H > 0$ in the proportional regime.
  \item[\textup{(c)}] \emph{Squared loss with transfer.}
  The risk converges to
  $q_\Sigma = p^{-1}\|\beta^*-\beta_0\|_{\Sigma_x}^2$.
  When the prior is informative,
  $q_\Sigma \ll \mathcal{R}_H$.
\end{enumerate}
\end{theorem}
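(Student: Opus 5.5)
The plan is to assemble the trichotomy from results already stated, treating each part separately and adding one new estimate for the positivity claims.

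\emph{Part (a).} This follows from Theorem~\ref{thm:l2-diverge}. If $\lambda_n = o(\sigma_n^2)$ (in particular fixed $\lambda$, or OLS), parts (i)--(iii) give divergence. If $\lambda_n = \tilde\lambda\sigma_n^2$, part (iv), which is Theorem~\ref{thm:floor} with $\beta_0 = 0$, gives the limit $p^{-1}\|\beta^*\|_{\Sigma_x}^2$. This limit is strictly positive whenever $\beta^* \neq 0$, since $\Sigma_x$ is positive definite with spectrum bounded below by the compact support of $\mu$ in $(0,\infty)$. The only remaining scaling, $\lambda_n/\sigma_n^2 \to \infty$, should be handled by a monotonicity/compactification argument. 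Because $\arg\min R = \{0\}$, the estimator then collapses to $0$ even faster, and the same limit results.

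\emph{Part (b).} Apply Theorem~\ref{thm:cdd}(B) with $L = \rho_k$, whose conjugate domain is $[-k,k]$, so $K = k$. This gives convergence to $\mathcal{R}_H(k,\lambda,\gamma,\mu) < \infty$, independent of $\sigma_n^2$. The new content is the strict lower bound $\mathcal{R}_H > 0$. I would get it from the CGMT scalar fixed-point system for the Huber M-estimator, in the spirit of \cite{ElKarouiBeanBickelEtAl2013}. In the auxiliary problem, the residual effective noise is $w + \tau_\star \zeta$ with $\zeta$ Gaussian, and the risk equals $\tau_\star^2$ up to the bias term. The argument has two steps.
\begin{itemize}[nosep,leftmargin=1.5em]
\item First I would show that $\mathcal{R}_H = 0$ forces the dual variables to vanish. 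Then the gradient condition $n^{-1}X^\top \psi_k(w) = \lambda\,\partial R(\beta^*)$ would have to hold exactly.
\item Second I would show this condition fails. The left side is a Gaussian vector with per-coordinate variance $\E[\psi_k(w)^2]/n > 0$, because $w$ is nondegenerate. Its normalized squared norm is therefore of order $\gamma\E[\psi_k(w)^2] > 0$, which the deterministic subgradient at fixed $\lambda$ cannot match almost surely.
\end{itemize}
This "proportional-regime penalty" yields $\mathcal{R}_H \ge c(\gamma,k,\mu) > 0$. I expect making this rigorous to be the main obstacle. Since $\mathcal{R}_H$ is only implicitly defined, the cleanest route I would try first is a lower bound via the auxiliary objective. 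Restricting to $\|\beta - \beta^*\|^2/p \le \epsilon$, I would show the AO value on that ball exceeds its global minimum by a margin bounded below in probability. CGMT then transfers the exclusion of the ball to the primary problem.

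\emph{Part (c).} This is exactly Theorem~\ref{thm:floor}, equivalently Theorem~\ref{thm:cdd}(U2). The qualitative comparison $q_\Sigma \ll \mathcal{R}_H$ is then interpreted as follows. Take a sequence of priors with $\|\beta^* - \beta_0\|_{\Sigma_x}^2/p \to 0$. Then $q_\Sigma \to 0$ while $\mathcal{R}_H$ stays bounded below by the constant from part (b), which does not depend on $\beta_0$. So for any sufficiently informative prior we have $q_\Sigma < \delta\,\mathcal{R}_H$ for any prescribed $\delta > 0$, which is the precise meaning I would attach to ``$\ll$''.
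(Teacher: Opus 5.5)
Your handling of (a), (c) and the convergence half of (b) follows the paper's route exactly. The paper gives no separate proof of this theorem; it simply combines Theorem~\ref{thm:cdd}(B) (with $K=k$ for $\rho_k$) with Theorems~\ref{thm:l2-diverge} and~\ref{thm:floor}. Your reading of ``$q_\Sigma \ll \mathcal{R}_H$'' is a sensible formalization of what the paper leaves informal: send $q_\Sigma\to 0$ while $\mathcal{R}_H$, which never involves $\beta_0$, stays fixed. The paper only asserts $\mathcal{R}_H>0$, so the positivity argument is the one new step you add, and it does not work as written. First, $\mathcal{R}_H=0$ means $p^{-1}\|\hat\beta_L-\beta^*\|_{\Sigma_x}^2\to 0$, not $\hat\beta_L=\beta^*$. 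Nothing therefore forces the exact identity $n^{-1}X^\top\psi_k(w)\in\lambda\,\partial R(\beta^*)$, and showing that this identity fails almost surely gives no lower bound on the limit. Also, at $\hat\beta_L=\beta^*$ the dual variables equal $\psi_k(w_i)\neq 0$, so they do not ``vanish.''

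Second, and more seriously, even a quantitative (Lipschitz-stability) version of your KKT idea cannot produce a constant $c(\gamma,k,\mu)$ at the paper's scaling. With rows $x_i\sim\mathcal{N}(0,\Sigma_x)$ and the loss averaged over $n$, the vector $n^{-1}X^\top\psi_k(w)$ has squared Euclidean norm $\asymp\gamma\E[\psi_k(w)^2]=O(1)$, as you computed. The loss gradient is Lipschitz with constant $C=\|X\|_{\mathrm{op}}^2/n=O(1)$. A mismatch of that size therefore only forces $\|\hat\beta_L-\beta^*\|\gtrsim 1$, i.e.\ normalized risk $\gtrsim 1/p$, which is off by a factor of $p$.

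In fact no such $c$ exists. For $R=\frac12\|\cdot\|^2$, stationarity gives $\hat\beta_L=(\lambda n)^{-1}X^\top\psi_k(y-X\hat\beta_L)$. Hence $\|\hat\beta_L\|\le k\|X\|_{\mathrm{op}}/(\lambda\sqrt n)=O(1)$, and the Huber risk tends to $\lim_p p^{-1}\|\beta^*\|_{\Sigma_x}^2$. This limit is $0$ for $\beta^*=0$, whatever $\gamma$ is. So positivity at this scale comes from the shrinkage term $\lambda\,\partial R(\beta^*)$, whose norm is typically of order $\sqrt p$, not from a proportional-regime fluctuation.

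You therefore need an explicit nondegenerate-signal hypothesis such as $\liminf_p p^{-1}\|\beta^*\|_{\Sigma_x}^2>0$. The same hypothesis is what makes the limit in (a) positive; $\beta^*\neq 0$ alone does not suffice as $p\to\infty$. Under this hypothesis, ridge is settled by the bound above. For $R$ with $L$-Lipschitz gradient, subtracting the stationarity conditions at $\hat\beta_L$ and at $\beta^*$ gives $\|\hat\beta_L-\beta^*\|\ge\|\lambda\nabla R(\beta^*)-n^{-1}X^\top\psi_k(w)\|/(C+\lambda L)$. This is of order $\sqrt p$ whenever $\|\nabla R(\beta^*)\|\gtrsim\sqrt p$, e.g.\ for strongly convex $R$. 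The AO-margin fallback you mention is the right kind of tool, but without the Huber scalar fixed-point system, which neither you nor the paper derives, it remains a plan. For (c) you then only need $\mathcal{R}_H>0$ for the given $\beta^*$, not a bound uniform in $\beta^*$.
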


The trichotomy reveals that two distinct penalties impede
estimation under infinite-variance noise.  The
\emph{heavy-tail penalty} arises from the interaction between
the loss function's conjugate geometry and the noise moment
structure; it is eliminated by any loss with bounded conjugate
domain.  The \emph{high-dimensional penalty}
$\mathcal{R}_H > 0$ arises from the proportional regime
$p/n \to \gamma > 0$; it persists even when the effective
noise is bounded and is bypassed only when external
structural information (the transfer prior) is available.

\begin{remark}[When the Huber estimator dominates]
\label{rmk:huber-dominates}
If the transfer prior is poor
($q_\Sigma > \mathcal{R}_H$), the Huber estimator strictly
dominates the transfer-regularized squared-loss estimator.
The practitioner therefore faces a genuine trade-off:
accurate prior information yields lower risk via transfer,
but absent such information, a robust loss function provides
a safer alternative.
\end{remark}

\subsection{Design universality}\label{sec:universality}

\begin{theorem}[Design universality]\label{thm:universality}
All results of Theorems~\ref{thm:cdd}--\ref{thm:trichotomy}
hold when Assumption~\ref{ass:design} is relaxed to allow
$Z_{ij}$ to be independent and identically distributed
sub-Gaussian random variables with $\E[Z_{ij}] = 0$,
$\E[Z_{ij}^2] = 1$, and $\|Z_{ij}\|_{\psi_2} \le K$.
\end{theorem}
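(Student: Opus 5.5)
The plan is to use the universality framework of \cite{MontanariSaeed2022} and to reduce each of Theorems~\ref{thm:cdd}--\ref{thm:trichotomy} to the Gaussian case already treated. Every limit in those theorems is the limit of a functional of an empirical-risk minimizer
\[
  \hat\beta = \arg\min_\beta \frac{1}{n}\sum_{i=1}^n L(y_i - x_i^\top\beta) + \lambda_n R(\beta-\beta_0).
\]
By the Montanari--Saeed theorem, the minimum value, and (through a strong-convexity perturbation) any Lipschitz test function of the minimizer such as $p^{-1}\|\hat\beta-\beta^*\|_{\Sigma_x}^2$, are asymptotically the same for Gaussian and sub-Gaussian rows. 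This holds provided three conditions hold. First, the rows $x_i = \Sigma_x^{1/2} z_i$ must satisfy a one-dimensional CLT for projections $x_i^\top\theta$ uniformly over $\theta$ in a suitable set. Second, the loss must be Lipschitz or locally Lipschitz with controlled growth, and the regularizer convex. Third, the minimizer must be localized in a compact set with high probability.

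The steps, in order, are as follows.

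\textbf{Step 1: rescale.} I would divide the objective by $\sigma_n^2$ in the squared-loss case, or leave it unchanged in the bounded-domain case, so that it is $\mathcal{O}(1)$ at the relevant scale. Then I would condition on the noise vector $w$. Since $w$ is independent of $Z$, it can be treated as a fixed sequence with $n^{-1}\|w^{(\tau_n)}\|^2/\sigma_n^2 \to 1$, by Theorem~\ref{thm:truncation}(ii). In the bounded-domain case it is enough to use $n^{-1}\sum|w_i| \to \E|w|$.

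\textbf{Step 2: the projection CLT.} For sub-Gaussian $z_i$ with $\|Z_{ij}\|_{\psi_2}\le K$, projections onto directions $\theta$ with $\|\Sigma_x^{1/2}\theta\|_\infty/\|\Sigma_x^{1/2}\theta\|_2 \to 0$ are asymptotically Gaussian, with Berry--Esseen control. I would take this delocalized set as the constraint set, as Montanari--Saeed do, and argue that $\hat\beta-\beta^*$ lies in it. For separable $R$ this can be argued by exchangeability of coordinates. Otherwise it requires an additional assumption, which I would state explicitly.

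\textbf{Step 3: loss regularity.} For losses with bounded conjugate domain, $L$ is $K$-Lipschitz because $\dom(L^*)\subseteq[-K,K]$. This is exactly the hypothesis of the framework, and the heavy-tailed $w_i$ enter only through $\E|w|$. For the squared loss after dividing by $\sigma_n^2$, the loss is $(t/\sigma_n)^2/2$, evaluated at residuals of order $\sigma_n$. Rescaling $\beta-\beta_0$ by $\sigma_n$ makes the problem an ordinary ridge-type problem with $\mathcal{O}(1)$ noise $w/\sigma_n$ and $\mathcal{O}(1)$ regularization $\tilde\lambda$, to which the standard square-loss version of the theorem applies.

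\textbf{Step 4: localization.} I would reuse the compactification Lemma~\ref{lem:compact}: coercivity of $R$ together with $\lambda_n$ of order at least $\sigma_n^2$, or the Lipschitz bound in case (B), confines $\hat\beta$ to a ball whose radius does not depend on the design law. The divergence statements are transferred by applying universality to the truncated problem restricted to balls of radius $M$ and then letting $M\to\infty$.

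The main obstacle is that the noise is heavy-tailed and varies with $n$. The universality theorem is stated for i.i.d.\ data with fixed distribution, so I must check that its error bounds depend on $w$ only through quantities that Step~1 controls, namely $n^{-1}\sum|w_i|$ or $n^{-1}\sum(w_i^{(\tau_n)})^2/\sigma_n^2$. A related difficulty is the mismatch between the true noise $w$ and its Winsorization. This gap is removed by the event $\{\max_i|w_i|\le\tau_n\}$, whose probability stays bounded away from zero but does not tend to one. So I would instead compare the two estimators directly using the Lipschitz bound in case (B), and in case (U) the scaling by $\sigma_n^2$.
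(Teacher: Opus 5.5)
Your route is the paper's. Its proof (Appendix~E) is exactly a check of the conditions of Montanari--Saeed's Corollary~3:
\begin{itemize}
\item regularity of the loss;
\item compactness from Lemma~\ref{lem:compact};
\item local Lipschitzness of the regularizer;
\item pointwise normality of $x_i=\Sigma_x^{1/2}\bar x_i$;
\item independence of the Winsorized noise from the design.
\end{itemize}
Your caveats are well placed, and the paper simply asserts the two conditions you worry about. Pointwise normality fails along localized directions: with $\Sigma_x=I$, Rademacher $Z_{ij}$ and $\theta=e_1$, you get $x_i^\top\theta=Z_{i1}$. ``Bounded, hence sub-Gaussian'' gives no uniform constant: by the computation in Theorem~\ref{thm:truncation}(ii), $\E[(w^{(\tau_n)}/\sigma_n)^4]\asymp n$, so $\|w^{(\tau_n)}/\sigma_n\|_{\psi_2}\gtrsim n^{1/4}$. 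Two further points also go beyond the paper: your remark that $\Prob(\max_i|w_i|\le\tau_n)\to e^{-c}$, and your handling of case~(B) through the uniform $K$-Lipschitz property of $t\mapsto L(w_i+t)$ (the paper verifies only the squared loss).

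As a proof, however, the plan has gaps.

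\textbf{First, Step~3 is false as stated.} With $\beta-\beta_0=\sigma_n\theta$ the normalized objective is $\frac{1}{2n}\|w/\sigma_n+X\Delta/\sigma_n-X\theta\|^2+\tilde\lambda R(\sigma_n\theta)$. The penalty is therefore $\tilde\lambda\sigma_n^2\|\theta\|^2/2$ for ridge and $\tilde\lambda\sigma_n\|\theta\|_1$ for the Lasso, not $O(1)$. For ridge the ratio of penalty curvature to data curvature is $\tilde\lambda\sigma_n^2$ under every rescaling, and this divergence is exactly the mechanism of Theorem~\ref{thm:floor}. So there is no fixed-parameter square-loss problem to which the framework applies off the shelf. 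You need either universality that is uniform in a diverging regularization, or a direct, design-light argument for the collapse $\hat\beta\to\beta_0$. For ridge the latter needs only $\|X\|_{\mathrm{op}}^2/n=O_\Prob(1)$.

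\textbf{Second, exchangeability does not deliver delocalization.} The AR(1) $\Sigma_x$ of Section~\ref{sec:simulations} and a fixed $\beta^*$ are not permutation invariant. Moreover, in the floor regime $\hat\beta-\beta^*\approx-\Delta$ whatever $R$ is, so separability of $R$ is irrelevant. What you need is $\|\Sigma_x^{1/2}\Delta\|_\infty/\|\Sigma_x^{1/2}\Delta\|_2\to0$ (and its analogue for $\beta^*$ without transfer), together with an $\ell_\infty$ bound on the minimizer itself. The statement contains no such hypothesis.

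\textbf{Third, the step you call the main obstacle is left as ``I must check''.} For the squared loss, the derivative of the $i$-th loss term grows with $|w_i|$. The normalized noise also has fourth moment of order $n$, the same borderline as in Theorem~\ref{thm:truncation}(ii). So it is not automatic that the interpolation error sees $w$ only through $n^{-1}\sum(w_i^{(\tau_n)})^2/\sigma_n^2$. In case~(B) the per-row Lipschitz constant is $K$ regardless of $w_i$, so this issue disappears there. That case is the closest to complete, modulo delocalization.
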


\begin{proof}
The result follows from the universality framework of
\cite{MontanariSaeed2022}.  The five conditions of their
Corollary~3 are verified in Appendix~E: (i)~the squared
loss has locally Lipschitz gradient; (ii)~the feasible set
is compact by Lemma~\ref{lem:compact}; (iii)~the regularizer
is locally Lipschitz on bounded sets; (iv)~the design
$x_i = \Sigma_x^{1/2}\bar{x}_i$ with sub-Gaussian
$\bar{x}_i$ satisfies the pointwise normality condition;
(v)~the Winsorized noise is bounded (hence sub-Gaussian) and
independent of the design.  The key observation is that the
universality mechanism acts on the design matrix, while the
noise passes through unchanged regardless of its magnitude
or distribution.
\end{proof}

\section{Proofs}\label{sec:proofs-main}

\subsection{Proof architecture for the Conjugate Domain
Dichotomy}\label{sec:proofs-cdd}

The proof of Theorem~\ref{thm:cdd} proceeds through the CGMT
in three stages.

\emph{Stage~I: Compactification.}
The following lemma ensures that the optimizer remains bounded.

\begin{lemma}[Compactification]\label{lem:compact}
Under noise-adapted regularization
$\lambda_n = \tilde\lambda\sigma_n^2$ with coercive convex $R$,
there exists a deterministic constant $C > 0$, independent
of $n$, such that
$\Prob(\|\hat\beta - \beta^*\|_{\Sigma_x} > C) \to 0$.
\end{lemma}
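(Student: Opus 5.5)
The plan is a basic-inequality argument that compares the objective at $\hat\beta$ with its value at the prior center $\beta_0$. After dividing by $\lambda_n=\tilde\lambda\sigma_n^2$, the diverging noise energy $n^{-1}\|w\|^2$ should become an $\mathcal{O}_\Prob(1)$ quantity, and the coercivity of $R$ then confines $\hat\beta-\beta_0$.

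Throughout I read the statement at the per-coordinate scale used in every risk statement of the paper, namely $p^{-1/2}\|\hat\beta-\beta^*\|_{\Sigma_x}\le C$. Correspondingly I use coercivity in a dimension-uniform form: for every $M$ there is $C(M)$, independent of $p$, with
\[
\{\phi:\ R(\phi)\le R(0)+pM\}\subseteq\{\|\phi\|\le C(M)\sqrt p\}.
\]
This holds, for example, for separable $R=\sum_j r(\phi_j)$ with $r$ coercive. This uniformity is an assumption I am adding, not something stated in the lemma.

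\emph{Step 1 (basic inequality).} Since $\hat\beta$ minimizes the objective, comparing with $\beta=\beta_0$ and writing $y-X\beta_0=X\Delta+w$ with $\Delta=\beta^*-\beta_0$ gives
\[
\tilde\lambda\sigma_n^2\,R(\hat\beta-\beta_0)\;\le\;\frac{1}{2n}\|X\Delta+w\|^2+\tilde\lambda\sigma_n^2 R(0)\;\le\;\frac1n\|X\Delta\|^2+\frac1n\|w\|^2+\tilde\lambda\sigma_n^2R(0).
\]
Dividing by $\tilde\lambda\sigma_n^2$ yields
\[
R(\hat\beta-\beta_0)-R(0)\;\le\;\frac{\|X\Delta\|^2/n+\|w\|^2/n}{\tilde\lambda\sigma_n^2}.
\]

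\emph{Step 2 (controlling the right-hand side).}
\begin{itemize}
\item The design term: $X\Delta$ has i.i.d.\ $\mathcal N(0,\|\Delta\|_{\Sigma_x}^2)$ entries, so $n^{-1}\|X\Delta\|^2=\|\Delta\|_{\Sigma_x}^2(1+o_\Prob(1))=p\,q_\Sigma(1+o_\Prob(1))$. Here $q_\Sigma$ is assumed bounded, as is implicit in Theorem~\ref{thm:floor}.
\item The noise term, Winsorized model: $n^{-1}\|w^{(\tau_n)}\|^2/\sigma_n^2\to1$ in probability by Theorem~\ref{thm:truncation}(ii).
\item The noise term, raw noise: Winsorization cannot simply be assumed away, because $\Prob(\max_i|w_i|>n^{1/\alpha})\to1-e^{-c}\neq0$. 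Instead I use that $\sum_i w_i^2$ lies in the domain of attraction of an $(\alpha/2)$-stable law with normalization $n^{2/\alpha}$. Combined with Theorem~\ref{thm:truncation}(i), $\sigma_n^2\asymp n^{(2-\alpha)/\alpha}$, this gives $n^{-1}\|w\|^2/\sigma_n^2=\mathcal O_\Prob(1)$, i.e.\ tightness rather than convergence.
\end{itemize}
In both cases the noise contribution after dividing by $\sigma_n^2$ is $\mathcal O_\Prob(1)$, which is negligible against the design term of order $p$. Because $\sigma_n^2\to\infty$, the design term is in fact also $o_\Prob(p)$ after division. Hence for any $M>0$, with probability tending to one, $R(\hat\beta-\beta_0)\le R(0)+pM$.

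\emph{Step 3 (from the sublevel set to the risk).} The dimension-uniform coercivity then gives $\|\hat\beta-\beta_0\|\le C(M)\sqrt p$. With $s_{\max}$ the bound on the compact support of $\mu$,
\[
\|\hat\beta-\beta^*\|_{\Sigma_x}\;\le\;\sqrt{s_{\max}}\,\|\hat\beta-\beta_0\|+\|\Delta\|_{\Sigma_x}\;\le\;\bigl(\sqrt{s_{\max}}\,C(M)+\sqrt{q_\Sigma}\bigr)\sqrt p
\]
with probability tending to one. This is the claim with a deterministic $C$ independent of $n$.

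\emph{Main obstacle.} I expect the main difficulty to be the uniformity of coercivity in $p$: coercivity of a single function on $\R^p$ gives no $n$-independent constant. I would first try separable or rotation-invariant $R$, or assume a growth bound $R(\phi)-R(0)\ge c\|\phi\|^{\kappa}p^{1-\kappa/2}$. The secondary issue is using tightness of the raw noise energy, rather than Theorem~\ref{thm:truncation}(ii), which applies only to the Winsorized proxy.
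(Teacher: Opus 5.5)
Your argument has a genuine gap in Step~3, and a second issue concerns the scale of the statement.

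\textbf{The gap in Step~3.} Step~3 rests on a claim that is false in the generality you state, and the failure affects regularizers the paper relies on. The inclusion $\{\phi: R(\phi)\le R(0)+pM\}\subseteq\{\|\phi\|\le C(M)\sqrt p\}$ does not hold for separable $R=\sum_j r(\phi_j)$ with merely coercive $r$. For $R=\|\cdot\|_1$, the vector $pM e_1$ lies in the sublevel set but has Euclidean norm $pM\gg\sqrt p$. For a separable penalty with a fixed component $r$, your assumption holds only if $r$ grows at least quadratically. So the argument covers ridge and elastic net, but not the Lasso or group Lasso used in Theorem~\ref{thm:floor} and in the simulations. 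For the Lasso your inequality only gives $\|\hat\beta-\beta_0\|\le\|\hat\beta-\beta_0\|_1=\mathcal O_\Prob(p/\sigma_n^2+1)$, which is $\mathcal O(\sqrt p)$ only when $\alpha\le 4/3$. On the positive side, comparing with $\beta_0$ rather than with $v=0$ (i.e.\ $\beta=\beta^*$, as the paper does) is a good choice. For ridge your bound already gives $p^{-1}\|\hat\beta-\beta_0\|^2\xrightarrow{\Prob}0$, which is the floor $q_\Sigma$ itself rather than mere compactness.

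\textbf{The scale of the statement.} The paper runs the same sublevel-set argument, but at the literal unnormalized scale. There $\ell_1$-type coercivity is dimension-free ($\|\phi\|_1\ge\|\phi\|_2$), provided $R(\Delta)=\mathcal O(1)$. It obtains a deterministic radius by asserting that the baseline $(2n\sigma_n^2)^{-1}\|w^{(\tau_n)}\|^2+\tilde\lambda R(\Delta)$ converges to $\tfrac12+\tilde\lambda R(\Delta)$ by Theorem~\ref{thm:truncation}(ii). It does not address the uniformity in $p$ that you rightly flag.

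That baseline is not deterministic in the limit. The number of indices with $|w_i|>\tau_n$ is asymptotically Poisson$(c)$. On the event of at least $k$ exceedances, $\|w^{(\tau_n)}\|^2/(n\sigma_n^2)\ge k\tau_n^2/(n\sigma_n^2)\to k(2-\alpha)/(2c)$. So the normalized Winsorized energy is tight but exceeds any fixed level with probability bounded away from zero, and Theorem~\ref{thm:truncation}(ii) cannot hold.

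Consequences for your proof:
\begin{itemize}
\item Your raw-noise caveat applies to the Winsorized proxy too. Get $\mathcal O_\Prob(1)$ from Markov and $\E\|w^{(\tau_n)}\|^2=n\sigma_n^2$, not from part~(ii).
\item Any bound that discards the fit term at $\hat\beta$ yields only tightness at the literal scale. Your $\sqrt p$ reading is what absorbs this, but it proves something weaker than the lemma's fixed $C$.
\end{itemize}

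\textbf{A repair.} To get a fixed $C$, for raw noise and for the Lasso, keep that fit term. Compare objective values at $\hat\beta$ and $\beta_0$, with $\phi=\hat\beta-\beta_0$. This gives $\tilde\lambda\,(R(\phi)-R(0))\le\|X\Delta\|^2/(2n\sigma_n^2)+\|X^\top w\|\,\|\phi\|/(n\sigma_n^2)$, in which $\|w\|^2$ has cancelled. Moreover $\|X^\top w\|/(n\sigma_n^2)=\mathcal O_\Prob(\sigma_n^{-1})$, because $\E[\|X^\top w\|^2\mid w]=\|w\|^2\,\mathrm{tr}\,\Sigma_x$ and $\|w\|^2/(n\sigma_n^2)$ is tight. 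Now suppose $\|\Delta\|_{\Sigma_x}=\mathcal O(1)$ (as in the simulations) and $R(\phi)-R(0)\ge a\|\phi\|-b$ with $a>0$ and $b$ independent of $p$ (true for $\ell_1$, group Lasso and ridge). Then this inequality confines $\hat\beta-\beta^*$ to a fixed ball with probability tending to one.
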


\begin{proof}
The objective at $v = 0$ equals
$(2n\sigma_n^2)^{-1}\|w^{(\tau_n)}\|^2
+ \tilde\lambda R(\Delta)$, which converges in probability
to $\frac{1}{2} + \tilde\lambda R(\Delta)$ by
Theorem~\ref{thm:truncation}(ii).  On the sphere
$\|v\|_{\Sigma_x} = C$, the coercivity of $R$ ensures
$\tilde\lambda R(v + \Delta) > \frac{1}{2}
+ \tilde\lambda R(\Delta)$ for $C$ sufficiently large.
Since the minimizer achieves a value at most the baseline,
it must lie inside the ball.

An important structural observation emerges from the proof:
the quadratic confinement provided by the design matrix,
$(2n\sigma_n^2)^{-1}\|Xv\|^2 \to \|v\|_{\Sigma_x}^2
/(2\sigma_n^2) \to 0$, vanishes under the $\sigma_n^2$
normalization.  The regularizer is therefore the sole
mechanism confining the estimator under diverging noise.
\end{proof}

\emph{Stage~II: CGMT reduction.}
With the feasible set compact, the CGMT
\cite{Thrampoulidis2015} replaces the bilinear form
$u^\top Z(\Sigma_x^{1/2}v)$ in the Primary Optimization with
independent Gaussian vectors $g \sim \mathcal{N}(0, I_n)$ and
$h \sim \mathcal{N}(0, \Sigma_x)$, producing the Auxiliary
Optimization.  The noise vector $w^{(\tau_n)}$ enters both
formulations identically as an additive term; it is not
affected by Gordon's comparison inequality, which operates
exclusively on the Gaussian bilinear form.

\emph{Stage~III: Scalarization and the moment mechanism.}
In the AO, optimizing over the dual variable $u$ yields
coordinate-wise updates.

For bounded $\dom(L^*)$: each $u_i$ is constrained to
$[-K, K]$, so $u_i^* = \clip(w_i^{(\tau_n)} +
\|\tilde{v}\|g_i, -K, K)$.  The effective noise
$n^{-1}\|u^*\|^2 \le K^2$ is bounded deterministically.
Meanwhile, $n^{-1}\sum|w_i^{(\tau_n)}|
\xrightarrow{\Prob} \E[|w|] < \infty$ by the law of large
numbers and the dominated convergence theorem
(since $|w^{(\tau_n)}| \le |w|$ and $\E[|w|] < \infty$ for
$\alpha > 1$).  The AO scalarization produces a fixed-point
system with all terms $\mathcal{O}(1)$, yielding finite risk.

For unbounded $\dom(L^*)$ (e.g., $L^*(u) = u^2/2$):
$u_i^* = w_i^{(\tau_n)} + \|\tilde{v}\|g_i$ is unconstrained,
so $n^{-1}\|u^*\|^2 \asymp \sigma_n^2 \to \infty$.  After
$\sigma_n^2$ normalization, the AO reduces to the fixed-point
system of Theorem~\ref{thm:risk-general}, with the universal
floor following from the proximal collapse of
Theorem~\ref{thm:floor}.  Full details are in
Appendices~B--D.

\subsection{Proof of the exact risk formula}
\label{sec:proofs-risk}

The derivation of Theorem~\ref{thm:risk-general} follows the
CGMT scalarization after Stages~I--II above.  The Primary
Optimization, after normalization by $\sigma_n^2$, takes the
form
\[
  \min_{v \in \mathcal{S}_C}\;\max_{u \in \R^n}\;
  \frac{u^\top w^{(\tau_n)}}{n\sigma_n}
  - \frac{u^\top Z\Sigma_x^{1/2}v}{n\sigma_n}
  - \frac{\|u\|^2}{2n}
  + \tilde\lambda\,R(v + \Delta).
\]
Gordon's comparison replaces the bilinear term, and the
empirical quantities concentrate by
Theorem~\ref{thm:truncation} and standard Gaussian
concentration.  After optimizing over $u$ in closed form and
passing to the deterministic limit (using uniform convergence
on the compact set $\mathcal{S}_C$), the risk is characterized
by the fixed-point system~\eqref{eq:fp}.  For ridge
regression, the proximal operator reduces to the linear map
$x \mapsto x/(1+\eta)$, yielding the companion Stieltjes
representation~\eqref{eq:ridge-risk}.  Full details appear
in Appendix~C.

\section{Numerical experiments}\label{sec:simulations}

We present Monte Carlo simulations that verify the theoretical
predictions.  All experiments use $n = 2000$ observations,
$p = 1000$ covariates ($\gamma = 0.5$), an AR(1) covariance
structure $(\Sigma_x)_{ij} = 0.5^{|i-j|}$, and Student-$t$
noise with $1.5$ degrees of freedom ($\alpha = 1.5$).  The
signal $\beta^*$ has $10\%$ non-zero entries; the transfer
prior $\beta_0 = \beta^* + \Delta$ has misalignment
$\|\Delta\| = 1$.  Each point represents the average of 500
independent replications.

Figure~\ref{fig:paradox} displays the structural tension of
Section~\ref{sec:tension}: the OLS and fixed-regularization
ridge estimators exhibit power-law growth in risk as the noise
scale increases, while the transfer-regularized ridge estimator
remains constant at $q_\Sigma$.

\begin{figure}[tp]
\centering
\includegraphics[width=0.75\textwidth]{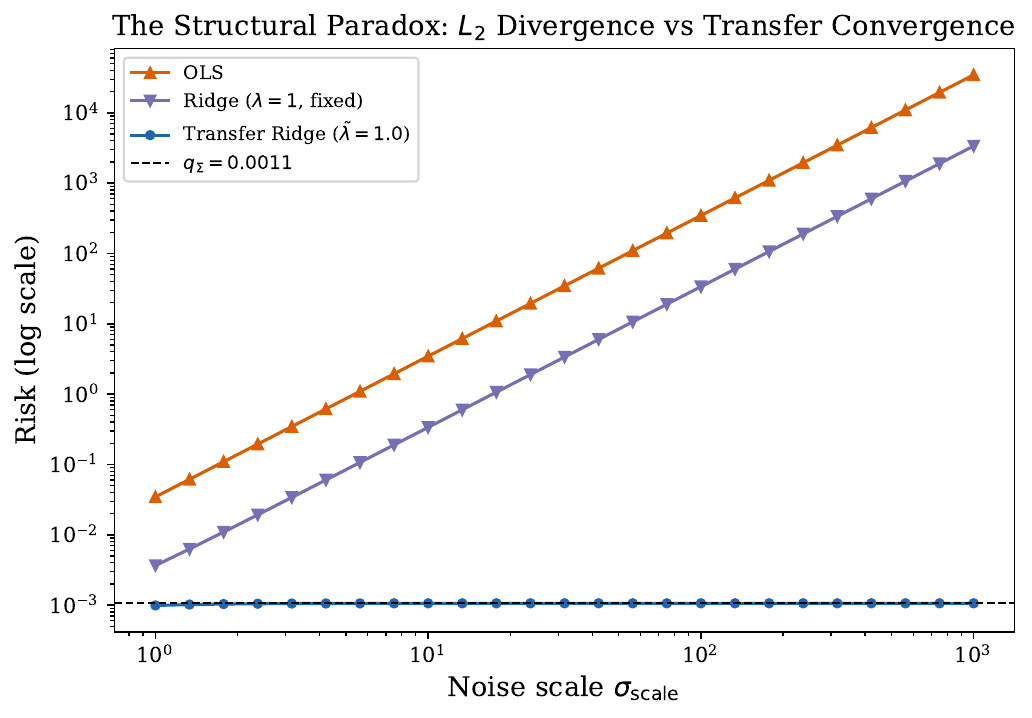}
\caption{Risk as a function of noise scale for three
squared-loss estimators.  OLS and fixed-$\lambda$ ridge
exhibit power-law divergence; transfer ridge is constant at
$q_\Sigma \approx 0.001$.}
\label{fig:paradox}
\end{figure}

Figure~\ref{fig:floor} illustrates the universal risk floor
of Theorem~\ref{thm:floor}: both ridge and Lasso
transfer-regularized estimators converge to the same limit
$q_\Sigma$, confirming that the regularizer geometry is
irrelevant in the diverging-noise limit.

\begin{figure}[tp]
\centering
\includegraphics[width=0.75\textwidth]{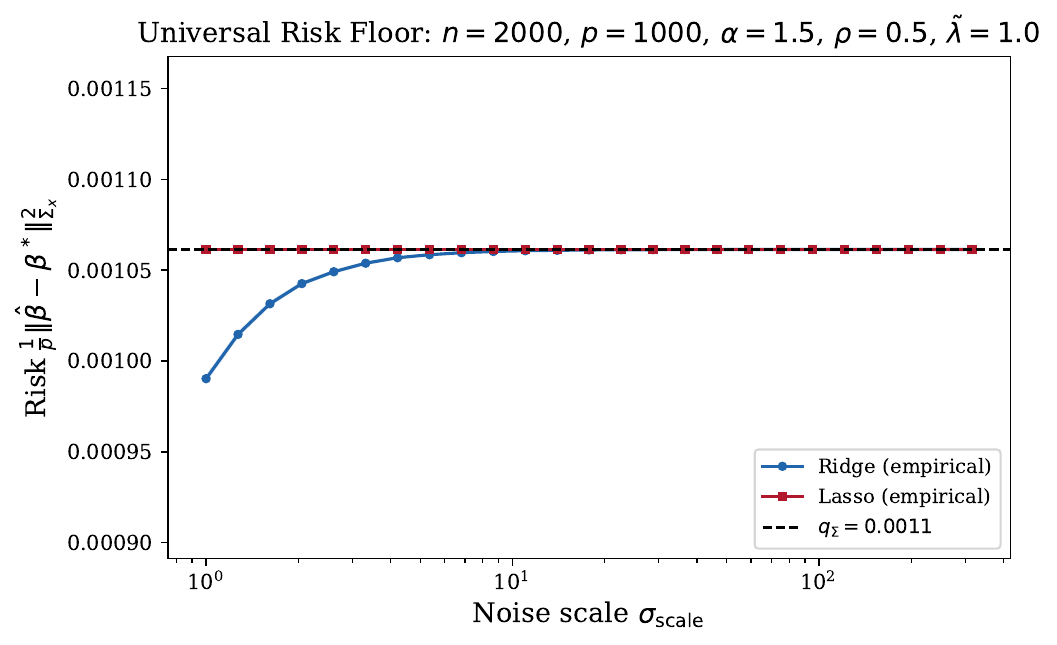}
\caption{Transfer ridge and transfer Lasso estimators
converge to the same universal risk floor $q_\Sigma$ as the
noise scale increases.}
\label{fig:floor}
\end{figure}

Figure~\ref{fig:transient} validates the finite-noise-level
predictions of Theorem~\ref{thm:risk-general}.  The
theoretical risk curve, computed from the Stieltjes
fixed-point system~\eqref{eq:ridge-risk}, is overlaid on the
empirical risk from 500-trial Monte Carlo simulations across
25 values of the effective noise variance $\sigma_n^2$.  The
median relative error between theory and simulation is $0.03\%$.

\begin{figure}[tp]
\centering
\includegraphics[width=0.75\textwidth]{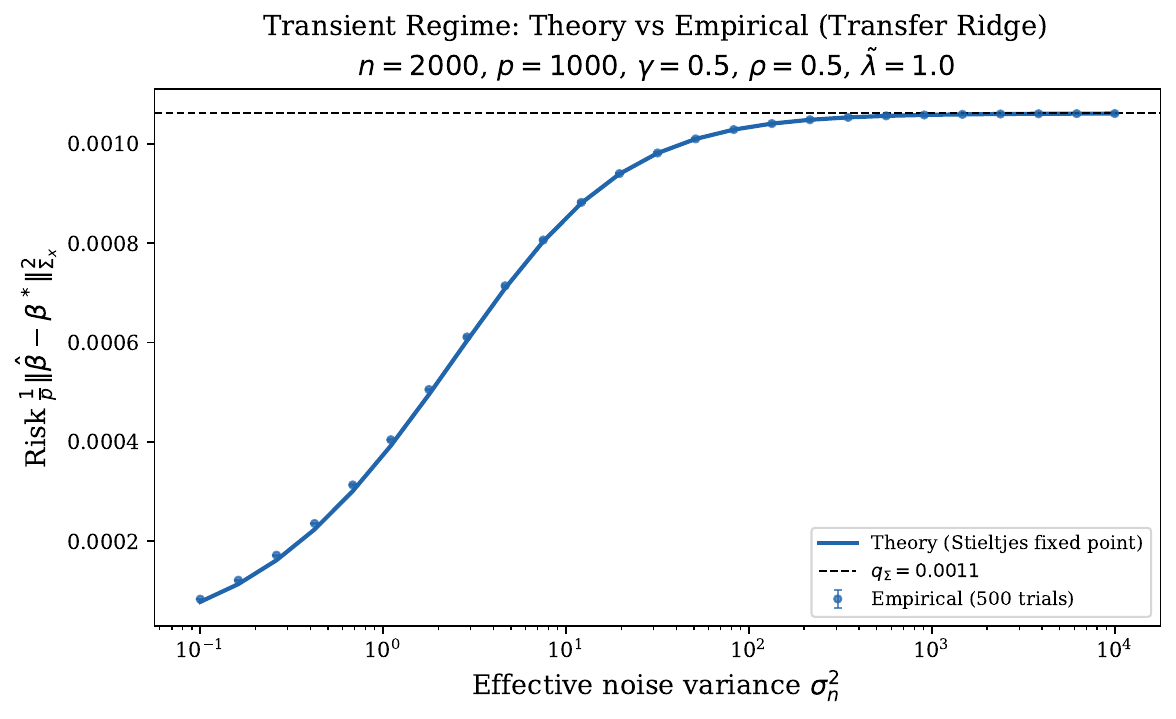}
\caption{Finite-$\sigma_n$ risk of transfer ridge: theoretical
prediction (solid) versus Monte Carlo simulation (dots, 500
trials each).  Median relative error: $0.03\%$.}
\label{fig:transient}
\end{figure}

Figure~\ref{fig:trichotomy} displays the complete trichotomy
of Theorem~\ref{thm:trichotomy}.  The squared-loss estimators
(OLS, fixed ridge) diverge; the Huber estimator
($k = 1.5$, $\lambda = 0.1$, no transfer prior) plateaus at
$\mathcal{R}_H \approx 0.19$; and the transfer ridge estimator
achieves $q_\Sigma \approx 0.001$.  The ratio
$\mathcal{R}_H / q_\Sigma \approx 179$ quantifies the gap
between the two finite-risk regimes.

\begin{figure}[tp]
\centering
\includegraphics[width=0.75\textwidth]{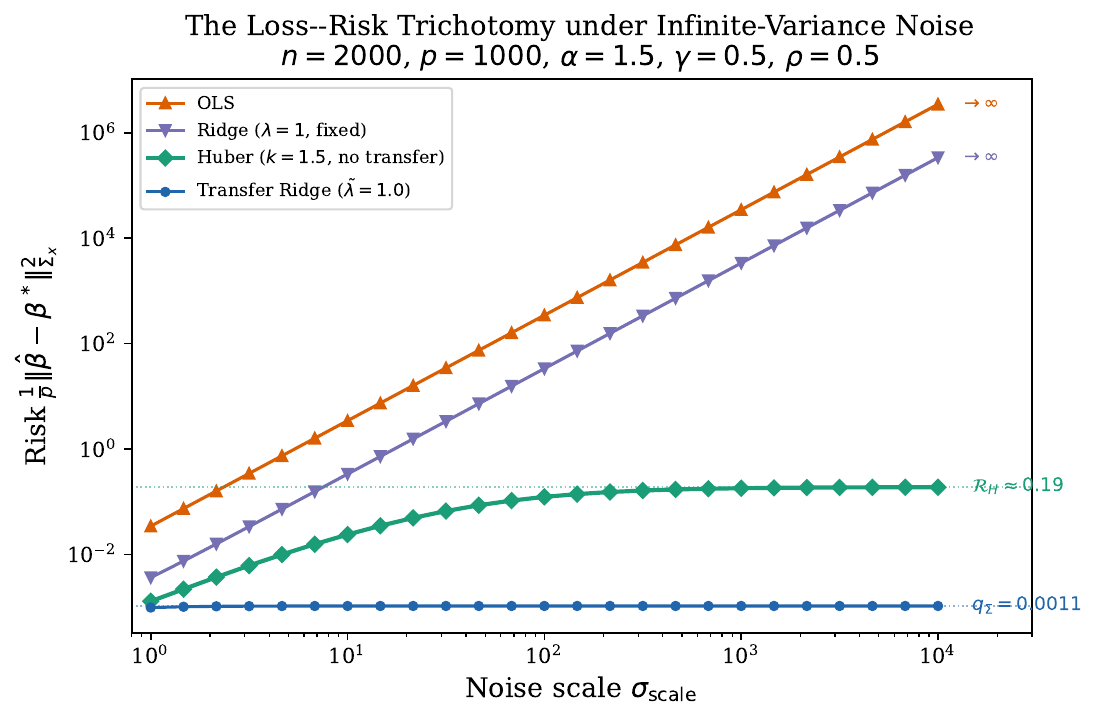}
\caption{The loss--risk trichotomy.  OLS and fixed ridge
diverge; the Huber estimator plateaus at
$\mathcal{R}_H \approx 0.19$; transfer ridge achieves
$q_\Sigma \approx 0.001$.}
\label{fig:trichotomy}
\end{figure}

\section{Discussion}\label{sec:discussion}

\subsection{The two-penalty decomposition}

The trichotomy of Theorem~\ref{thm:trichotomy} reveals that
two structurally distinct penalties impede estimation under
infinite-variance noise.  The \emph{heavy-tail penalty} is a
property of the loss--noise interaction: it arises when the
conjugate domain of the loss is unbounded, exposing the
estimator to a diverging noise moment.  It is eliminated by
adopting any loss function with bounded conjugate domain.
The \emph{high-dimensional penalty} $\mathcal{R}_H > 0$ is
a property of the proportional regime $p/n \to \gamma > 0$:
it persists even when the effective noise is bounded and
reflects the fundamental cost of estimating $p$ parameters
from $n$ observations.  This penalty is bypassed only by
incorporating external structural information through a
transfer prior.

This decomposition provides a reusable diagnostic: when a
practitioner observes poor estimation performance under
heavy-tailed data, the first question is whether the degradation
originates from the loss function (addressable by changing to
a robust loss) or from the high-dimensional geometry
(addressable only through additional information).

\subsection{Open problems}

Several directions for future investigation emerge from this
work.

\emph{Optimal Huber threshold.}
The risk $\mathcal{R}_H(k)$ depends on the threshold
parameter $k$ of the Huber loss.  Characterizing the optimal
threshold $k^*(\alpha, \gamma)$ as a function of the tail
index and the aspect ratio is an important optimization
problem with direct practical implications.

\emph{Non-convex regularizers.}
The CGMT framework requires convexity of both the loss and the
regularizer.  Extending the analysis to non-convex penalties
such as SCAD or MCP, which are widely used in practice, would
require alternative tools.

\emph{Heavy-tailed covariates.}
When the design matrix itself has heavy-tailed entries, the
sub-Gaussian assumption (Theorem~\ref{thm:universality}) is
violated.  Developing comparison inequalities for
heavy-tailed random matrices remains a substantial open
problem.

\emph{Intermediate growth rates.}
The moment hierarchy of Remark~\ref{rmk:moment-hierarchy}
predicts that losses with conjugate growth $|u|^q$ for
$q \in (1,2)$ interpolate between the Huber and squared-loss
behaviors.  A quantitative verification of this prediction for
specific loss functions is a natural next step.

\section*{Acknowledgments}
The author thanks the anonymous referees for their feedback.
Simulation code is available upon request.

\appendix

\section{Truncation universality: full details}
\label{app:truncation}

The complete proof of Theorem~\ref{thm:truncation}(ii),
including the double-limit argument on the triangular array,
is given in the main text (Section~\ref{sec:truncation}).
We provide here additional details on the Karamata bound
used in the tail-part estimate.

For the truncated variables $Y_{n,i} = (w_i^{(\tau_n)})^2
/ \sigma_n^2 - 1$, the tail expectation
$\E[|Y_{n,1}|\ind_{\{|Y_{n,1}| > M\}}]$ is bounded using
the structure of the regularly-varying tail.  Setting
$a_n(M) = ((M-1)\sigma_n^2)^{1/2}$, Karamata's theorem
applied to the ratio of partial integrals gives
$\int_{a_n}^{\tau_n} 2t\bar{F}(t)\,dt
/ \int_0^{\tau_n} 2t\bar{F}(t)\,dt
\to (a_n/\tau_n)^{2-\alpha}$,
which vanishes as $n \to \infty$ for each fixed $M$ since
$a_n/\tau_n \asymp M^{1/2}n^{-1/2} \to 0$.  This establishes
that $\limsup_n \E[|Y_{n,1}|\ind_{\{|Y_{n,1}|>M\}}] \le h(M)$
with $h(M) \to 0$.

\section{CGMT reduction and compactification}
\label{app:cgmt}

The CGMT framework of \citet{Thrampoulidis2015} requires three
conditions: (i)~the feasible set is compact (provided by
Lemma~\ref{lem:compact}); (ii)~the objective is convex--concave
(by construction); (iii)~the random matrix has i.i.d.\ Gaussian
entries.  The noise vector $w^{(\tau_n)}$ enters the Primary
Optimization as a fixed additive vector and is identical in
both the PO and the Auxiliary Optimization; it does not
interact with Gordon's comparison inequality.

The compactification proof (Lemma~\ref{lem:compact}) is given
in full in Section~\ref{sec:proofs-cdd}.  The structural
observation that the design's quadratic confinement vanishes
under the $\sigma_n^2$ normalization is discussed there.

\section{AO convergence and the universal floor}
\label{app:ao}

The AO scalarization and the derivation of the fixed-point
system proceed as follows.  After optimizing over the dual
variable $u$ in closed form, the AO reduces to a minimization
over $v$ involving three empirical quantities:
$n^{-1}\|w^{(\tau_n)}\|^2/\sigma_n^2$,
$n^{-1}\|g\|^2$, and
$g^\top w^{(\tau_n)}/({\sqrt{n}\sigma_n})$.
All three concentrate (the first by
Theorem~\ref{thm:truncation}(ii), the second by the law of
large numbers, the third by conditional Gaussianity), so the
empirical AO converges uniformly on $\mathcal{S}_C$ to a
deterministic limit.  The minimizer of the deterministic limit
satisfies the fixed-point system~\eqref{eq:fp}.

The proof of the universal risk floor
(Theorem~\ref{thm:floor}) is given in full in
Section~\ref{sec:floor}.

The fixed-point uniqueness at leading order follows from the
observation that $\mathcal{R}(\tau) \to q_\Sigma$ uniformly
in $\tau$ on compact sets (since the proximal operator with
diverging step size kills the $\tau$-dependence), so the
equation $\tau^2 = 1 + \gamma q_\Sigma$ has the unique
positive solution $\tau^* = \sqrt{1 + \gamma q_\Sigma}$.

\section{Conjugate Domain Dichotomy: proof details}
\label{app:cdd}

The proof of Theorem~\ref{thm:cdd} is presented in
Section~\ref{sec:proofs-cdd}.  We provide here the
verification that $n^{-1}\sum|w_i^{(\tau_n)}|$ converges.

Since $|w_i^{(\tau_n)}| \le |w_i|$ pointwise and
$\E[|w|] < \infty$ (as $\alpha > 1$), the dominated
convergence theorem gives
$\E[|w^{(\tau_n)}|] \to \E[|w|]$.  By the strong law of large
numbers for the i.i.d.\ bounded random variables
$|w_i^{(\tau_n)}|$ (bounded by $\tau_n$, with convergent
expectations):
$n^{-1}\sum_{i=1}^n |w_i^{(\tau_n)}|
\xrightarrow{\text{a.s.}} \E[|w|] < \infty$.
This ensures that the noise coupling under bounded conjugate
domain is $\mathcal{O}(1)$.

\section{Design universality: condition verification}
\label{app:universality}

We verify the five conditions of
\citet[Corollary~3]{MontanariSaeed2022}:

\begin{enumerate}[label=(\arabic*)]
  \item \textit{Loss function.}
  The squared loss $\ell(u;y) = (y-u)^2/2$ has locally
  Lipschitz gradient $\nabla_u\ell = -(y-u)$, satisfying
  Assumption~1$'$ of \citet{MontanariSaeed2022}.

  \item \textit{Compact constraint set.}
  $\mathcal{S}_C = \{\beta : \|\beta-\beta^*\|_{\Sigma_x}
  \le C\}$ is compact by Lemma~\ref{lem:compact}.

  \item \textit{Regularizer.}
  After $\sigma_n^2$ normalization,
  $\tilde\lambda R(\beta-\beta_0)$ is locally Lipschitz
  on bounded sets with $\mathcal{O}(1)$ constant.

  \item \textit{Sub-Gaussian design.}
  $x_i = \Sigma_x^{1/2}\bar{x}_i$ with i.i.d.\ sub-Gaussian
  $\bar{x}_{ij}$ satisfies pointwise normality
  (Assumption~5 of \citealt{MontanariSaeed2022}).

  \item \textit{Noise independence.}
  $w^{(\tau_n)}$ is independent of $X$ and bounded by
  $\tau_n$ (hence sub-Gaussian).  The universality acts on
  the design, not the noise.
\end{enumerate}

By \citet[Theorem~1]{MontanariSaeed2022}, the training error
is universal.  By \citet[Theorem~3(a)]{MontanariSaeed2022},
the test error is universal for strongly convex regularizers;
the Lasso case follows by $\varepsilon$-perturbation.

\bibliographystyle{plainnat}
\bibliography{references}

@book{BinghamGoldieTeugels1987,
  author    = {Bingham, N. H. and Goldie, C. M. and Teugels, J. L.},
  title     = {Regular Variation},
  publisher = {Cambridge University Press},
  series    = {Encyclopedia of Mathematics and its Applications},
  volume    = {27},
  year      = {1987},
  doi       = {10.1017/CBO9780511721434},
}

@book{Resnick2007,
  author    = {Resnick, Sidney I.},
  title     = {Heavy-Tail Phenomena: Probabilistic and Statistical Modeling},
  publisher = {Springer},
  year      = {2007},
  doi       = {10.1007/978-0-387-45024-7},
}

@book{SamorodnitskyyTaqqu1994,
  author    = {Samorodnitsky, Gennady and Taqqu, Murad S.},
  title     = {Stable Non-{G}aussian Random Processes},
  publisher = {Chapman \& Hall},
  year      = {1994},
}

@article{Huber1964,
  author    = {Huber, Peter J.},
  title     = {Robust estimation of a location parameter},
  journal   = {The Annals of Mathematical Statistics},
  volume    = {35},
  number    = {1},
  pages     = {73--101},
  year      = {1964},
  doi       = {10.1214/aoms/1177703732},
}

@book{HuberRonchetti2009,
  author    = {Huber, Peter J. and Ronchetti, Elvezio M.},
  title     = {Robust Statistics},
  edition   = {2nd},
  publisher = {John Wiley \& Sons},
  year      = {2009},
  doi       = {10.1002/9780470434697},
}

@article{ElKarouiBeanBickelEtAl2013,
  author    = {El~Karoui, Noureddine and Bean, Derek and Bickel, Peter J.
               and Lim, Chinghway and Yu, Bin},
  title     = {On robust regression with high-dimensional predictors},
  journal   = {Proceedings of the National Academy of Sciences},
  volume    = {110},
  number    = {36},
  pages     = {14557--14562},
  year      = {2013},
  doi       = {10.1073/pnas.1307842110},
}

@inproceedings{Thrampoulidis2015,
  author    = {Thrampoulidis, Christos and Oymak, Samet and Hassibi, Babak},
  title     = {Regularized linear regression: A precise analysis of the
               estimation error},
  booktitle = {Proceedings of the 28th Conference on Learning Theory (COLT)},
  pages     = {1683--1709},
  year      = {2015},
}

@article{ThrampoulidisOymakHassibi2018,
  author    = {Thrampoulidis, Christos and Oymak, Samet and Hassibi, Babak},
  title     = {Precise error analysis of regularized {M}-estimators in
               high dimensions},
  journal   = {IEEE Transactions on Information Theory},
  volume    = {64},
  number    = {8},
  pages     = {5592--5628},
  year      = {2018},
  doi       = {10.1109/TIT.2018.2840720},
}

@inproceedings{Gordon1988,
  author    = {Gordon, Yehoram},
  title     = {On {M}ilman's inequality and random subspaces which escape
               through a mesh in $\mathbb{R}^n$},
  booktitle = {Geometric Aspects of Functional Analysis},
  series    = {Lecture Notes in Mathematics},
  volume    = {1317},
  pages     = {84--106},
  publisher = {Springer},
  year      = {1988},
}

@inproceedings{MontanariSaeed2022,
  author    = {Montanari, Andrea and Saeed, Basil},
  title     = {Universality of empirical risk minimization},
  booktitle = {Proceedings of the 35th Conference on Learning Theory (COLT)},
  series    = {PMLR},
  volume    = {178},
  year      = {2022},
}

@article{HuLu2023,
  author    = {Hu, Hong and Lu, Yue M.},
  title     = {Universality laws for high-dimensional learning with
               random features},
  journal   = {IEEE Transactions on Information Theory},
  volume    = {69},
  number    = {3},
  pages     = {1932--1964},
  year      = {2023},
  doi       = {10.1109/TIT.2022.3217698},
}

@article{DobribanWager2018,
  author    = {Dobriban, Edgar and Wager, Stefan},
  title     = {High-dimensional asymptotics of prediction: Ridge regression
               and classification},
  journal   = {The Annals of Statistics},
  volume    = {46},
  number    = {1},
  pages     = {247--279},
  year      = {2018},
  doi       = {10.1214/17-AOS1549},
}

@article{HastieMontanariRossetTibshirani2022,
  author    = {Hastie, Trevor and Montanari, Andrea and Rosset, Saharon
               and Tibshirani, Ryan J.},
  title     = {Surprises in high-dimensional ridgeless least squares
               interpolation},
  journal   = {The Annals of Statistics},
  volume    = {50},
  number    = {2},
  pages     = {949--986},
  year      = {2022},
  doi       = {10.1214/21-AOS2133},
}

@article{BartlettLongLugosiTsigler2020,
  author    = {Bartlett, Peter L. and Long, Philip M. and Lugosi, G\'{a}bor
               and Tsigler, Alexander},
  title     = {Benign overfitting in linear regression},
  journal   = {Proceedings of the National Academy of Sciences},
  volume    = {117},
  number    = {48},
  pages     = {30063--30070},
  year      = {2020},
  doi       = {10.1073/pnas.1907378117},
}

@article{TsiglerBartlett2023,
  author    = {Tsigler, Alexander and Bartlett, Peter L.},
  title     = {Benign overfitting in ridge regression},
  journal   = {Journal of Machine Learning Research},
  volume    = {24},
  number    = {123},
  pages     = {1--76},
  year      = {2023},
}

@article{MeiMontanari2022,
  author    = {Mei, Song and Montanari, Andrea},
  title     = {The generalization error of random features regression},
  journal   = {Communications on Pure and Applied Mathematics},
  volume    = {75},
  number    = {4},
  pages     = {667--766},
  year      = {2022},
  doi       = {10.1002/cpa.22008},
}

@article{BelkinHsuMaMandal2019,
  author    = {Belkin, Mikhail and Hsu, Daniel and Ma, Siyuan and Mandal, Soumik},
  title     = {Reconciling modern machine-learning practice and the classical
               bias--variance trade-off},
  journal   = {Proceedings of the National Academy of Sciences},
  volume    = {116},
  number    = {32},
  pages     = {15849--15854},
  year      = {2019},
  doi       = {10.1073/pnas.1903070116},
}

@article{AdvaniSaxe2020,
  author    = {Advani, Madhu S. and Saxe, Andrew M.},
  title     = {High-dimensional dynamics of generalization error in
               neural networks},
  journal   = {Neural Networks},
  volume    = {132},
  pages     = {428--446},
  year      = {2020},
  doi       = {10.1016/j.neunet.2020.08.022},
}

@article{DarBaraniuk2022,
  author    = {Dar, Yehonathan and Baraniuk, Richard G.},
  title     = {Double double descent: On generalization errors in transfer
               learning between linear regression tasks},
  journal   = {SIAM Journal on Mathematics of Data Science},
  volume    = {4},
  number    = {4},
  pages     = {1447--1472},
  year      = {2022},
  doi       = {10.1137/21M1458788},
}

@article{LiCaiLi2022,
  author    = {Li, Sai and Cai, T. Tony and Li, Hongzhe},
  title     = {Transfer learning for high-dimensional linear regression},
  journal   = {Journal of the Royal Statistical Society: Series B},
  volume    = {84},
  number    = {1},
  pages     = {149--173},
  year      = {2022},
  doi       = {10.1111/rssb.12479},
}

@article{Adomaityte2024,
  author    = {Adomaityt\.{e}, Urte and Defilippis, Emanuele and
               Loureiro, Bruno and Sicuro, Gabriele},
  title     = {High-dimensional robust regression under heavy-tailed data},
  journal   = {arXiv preprint arXiv:2309.16476v2},
  year      = {2024},
  eprint    = {2309.16476},
  archiveprefix = {arXiv},
}

@article{LugosiMendelson2019,
  author    = {Lugosi, G\'{a}bor and Mendelson, Shahar},
  title     = {Mean estimation and regression under heavy-tailed
               distributions: A survey},
  journal   = {Foundations of Computational Mathematics},
  volume    = {19},
  number    = {5},
  pages     = {1145--1190},
  year      = {2019},
  doi       = {10.1007/s10208-019-09427-x},
}

@article{MarchenkoPastur1967,
  author    = {Marchenko, Vladimir A. and Pastur, Leonid A.},
  title     = {Distribution of eigenvalues for some sets of random matrices},
  journal   = {Matematicheskii Sbornik},
  volume    = {1},
  number    = {4},
  pages     = {457--483},
  year      = {1967},
  doi       = {10.1070/SM1967v001n04ABEH001994},
}

@book{BaiSilverstein2010,
  author    = {Bai, Zhidong and Silverstein, Jack W.},
  title     = {Spectral Analysis of Large Dimensional Random Matrices},
  edition   = {2nd},
  publisher = {Springer},
  year      = {2010},
  doi       = {10.1007/978-1-4419-0661-8},
}

@book{BauschkeCombettes2017,
  author    = {Bauschke, Heinz H. and Combettes, Patrick L.},
  title     = {Convex Analysis and Monotone Operator Theory in
               {H}ilbert Spaces},
  edition   = {2nd},
  publisher = {Springer},
  year      = {2017},
  doi       = {10.1007/978-3-319-48311-5},
}

\end{document}